	\theoremstyle{plain}
		\newtheorem{thm}{Theorem}[section]	
		\newtheorem{lem}[thm]{Lemma}
		\newtheorem{prop}[thm]{Proposition}
		\newtheorem{cor}[thm]{Corollary}
		\newtheorem{claim}{Claim}
		\newtheorem{thmintro}{Theorem}
		\newtheorem{propintro}[thmintro]{Proposition}
		\newtheorem{corintro}[thmintro]{Corollary}
	\theoremstyle{definition}
	\theoremstyle{remark}
		\newtheorem{rem}[thm]{Remark}
		\newcommand{\C}{\mathbb C}
		\DeclareMathOperator{\id}{id}
		\DeclareMathOperator{\inj}{inj}
		\DeclareMathOperator{\alg}{alg}
		\newcommand{\corr}{\mathcal E_{\mathfrak{Corr}}}
		\DeclareMathOperator{\im}{im}
		\newcommand{\acton}{\curvearrowright}
\title{Partial tensor-product functors and crossed-product functors}
\author{Julian Kranz and Timo Siebenand}
\thanks{Both authors were funded by the Deutsche Forschungsgemeinschaft (DFG, German Research Foundation) - Project-ID 427320536 - SFB 1442, as well as by Germany's Excellence Strategy EXC 2044 390685587, Mathematics Münster: Dynamics-Geometry-Structure.}
\begin{document}

\subjclass[2010]{46L55 (Primary) 46M15; 46L80 (Secondary)}

\keywords{$C^*$-algebras, exotic crossed products, tensor products}

	\maketitle

	\begin{abstract}
		For a given discrete group $G$, we apply results of Kirchberg on exact and injective tensor products of $C^*$-algebras to give an explicit description of the minimal exact correspondence crossed-product functor and the maximal injective crossed-product functor for $G$ in the sense of Buss, Echterhoff and Willett. In particular, we show that the former functor dominates the latter. 
	\end{abstract}

\section{Introduction}
A fruitful approach to construct examples of $C^*$-algebras is to complete $*$-algebras with respect to certain $C^*$-norms. 
For instance, if $G\acton A$ is an action of a discrete group on a $C^*$-algebra, one can complete the algebraic crossed product $A\rtimes_{\alg} G$ to get the \emph{maximal crossed product} $A\rtimes G$ or the \emph{reduced crossed product} $A\rtimes_r G$.

In the last decade, there has been an increasing interest in exotic completions of $A\rtimes_{\alg}G$, i.e. completions which strictly lie between the maximal and reduced completion. One important motivation comes from the \emph{Baum--Connes conjecture with coefficients} \cite{baum94} which predicts that the Baum--Connes assembly map
	\[\mu\colon K^G_*(\underline{E}G,A)\to K_*(A\rtimes_r G)\]
is an isomorphism. Counterexamples to the conjecture were constructed in \cite{higson02} by exploiting non-exactness of the functor $-\rtimes_r G$ for certain groups $G$. Later, in \cite{baum15} it was suggested to modify the conjecture by replacing the reduced crossed product with the \emph{minimal exact Morita compatible crossed product}. This modification strictly enlarges the class of actions $G\acton A$ for which the conjecture is known to hold and does not change the statement of the conjecture for exact groups. Other motivations to study \emph{exotic crossed-product functors} come from a-$T$-menability and property $(T)$ \cite{MR3138486} or from non-commutative duality \cite{MR3141810,MR3530866,MR3730739,MR3298718}.

General exotic crossed--product functors and their properties were studied systematically by Buss, Echterhoff and Willett \cite{buss15,buss18,buss2018minimal,buss2019injectivity,buss20}. They introduced the \emph{minimal exact crossed-product functor} $-\rtimes_{\mathcal E}G$, the \emph{minimal exact correspondence crossed-product functor} $-\rtimes_{\corr} G$ (which agrees with the minimal exact  Morita compatible crossed-product functor of \cite{baum15} for separable $G$-$C^*$-algebras \cite[Cor. 8.13]{buss18}) and the \emph{maximal injective crossed-product functor} $-\rtimes_{\inj} G$. All these functors agree with the reduced crossed product for exact groups, but their interrelations for non-exact groups are still unclear. 
In particular, it is unclear whether or not $-\rtimes_{\mathcal E}G$ and $-\rtimes_{\corr}G$ agree. A positive answer to this question would imply that the ``new'' Baum-Connes conjecture of \cite{baum15} agrees with the old conjecture of \cite{baum94} for complex coefficients $A=\C$. 
The aim of this article is to provide an explicit description of $-\rtimes_{\inj}G$ and $-\rtimes_{\corr}G$. We hope that the interplay of the universal properties and the explicit descriptions of these functors turn out useful in the future. Our main ingredient is the following construction by Kirchberg:
\begin{thmintro}[\cite{MR1403994}]\label{thm:kirchberg}
	There is a tensor-product functor $-\otimes_{i,\varepsilon}-$ satisfying the following properties:
	\begin{enumerate}
		\item For every $C^*$-algebra $A$, $A\otimes_{i,\varepsilon}-$ is the minimal exact partial tensor-product functor for $A$.
		\item For every $C^*$-algebra $B$, $-\otimes_{i,\varepsilon}B$ is the maximal injective partial tensor-product functor for $B$.
	\end{enumerate}
	In particular, $-\otimes_{i,\varepsilon}-$ is the unique tensor-product functor which is injective in the first variable and exact in the second variable. Furthermore, $-\otimes_{i,\varepsilon}-$ is functorial for completely positive maps in both variables.
\end{thmintro}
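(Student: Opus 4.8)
The plan is to realise $-\otimes_{i,\varepsilon}-$ as the maximal injective partial tensor-product functor in the first variable, to read its functoriality (including for completely positive maps) and property~(2) off an explicit model, and then to identify the result with the minimal exact functor of property~(1). Write $A\odot B$ for the algebraic tensor product and recall the two endpoints: $-\otimes_{\max}-$ is functorial for completely positive maps and exact in each variable, while $-\otimes_{\min}-$ is injective in each variable. In particular injective partial tensor-product functors for a fixed $B$ exist; the pointwise supremum of any family of them is bounded above by $\otimes_{\max}$ and is again a functorial injective $C^*$-norm, so a largest one $A\otimes_i B$ exists. Dually $A\otimes_{\max}-$ is exact, and since the class of exact partial tensor-product functors for a fixed $A$ is closed under the appropriate (ideal-theoretic) joins, a least one $A\otimes_\varepsilon-$ exists. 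The content of the theorem is then the single equality $\otimes_i=\otimes_\varepsilon$: bifunctoriality, cp-functoriality and the uniqueness statement are formal once both extremal descriptions are available.

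For the explicit model I would use Hamana's injective envelope $I(A)$, viewed as a $C^*$-algebra containing $A$, and set $A\otimes_i B:=\overline{A\odot B}\subseteq I(A)\otimes_{\max}B$; this is a $C^*$-completion of $A\odot B$ with $\|\cdot\|_{\min}\le\|\cdot\|_i\le\|\cdot\|_{\max}$. Functoriality for completely positive maps in the second variable is inherited from $\otimes_{\max}$, and in the first variable a completely positive map $A\to A'$ extends, by injectivity of $I(A')$, to a completely positive map $I(A)\to I(A')$ which one tensors with $\id_B$ over $\otimes_{\max}$. Injectivity in the first variable is the crucial elementary point: for $A_0\subseteq A$ the inclusion extends to a complete order embedding $I(A_0)\hookrightarrow I(A)$ which, $I(A_0)$ being injective, has a completely positive left inverse $I(A)\to I(A_0)$; tensoring both maps with $\id_B$ over $\otimes_{\max}$ and restricting to $A_0\odot B$ gives $\|\cdot\|_i^{A_0}=\|\cdot\|_i^{A}$ there. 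Maximality is then immediate: if $\alpha$ is an injective partial tensor-product functor for $B$ and $x\in A\odot B$, then $\|x\|_\alpha=\|x\|_{I(A)\otimes_\alpha B}\le\|x\|_{I(A)\otimes_{\max}B}=\|x\|_i$. This is property~(2). I would also record that $A\otimes_i-$ is exact: the kernel of $A\otimes_i B\to A\otimes_i B/I$ is $\overline{A\odot B}\cap\ker\bigl(I(A)\otimes_{\max}B\to I(A)\otimes_{\max}B/I\bigr)=\overline{A\odot B}\cap\bigl(I(A)\otimes_{\max}I\bigr)$, and for $y$ in this intersection and $(e_\lambda)$ an approximate unit of $I$ one has $(1\otimes e_\lambda)y\in\overline{A\odot I}$ while $(1\otimes e_\lambda)y\to y$; hence the kernel is exactly $\overline{A\odot I}=A\otimes_i I$, which together with surjectivity of $A\otimes_i B\to A\otimes_i B/I$ yields the exact sequence.

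It remains to prove property~(1), i.e.\ $\otimes_i=\otimes_\varepsilon$. One inequality is now soft: $A\otimes_i-$ is an exact partial tensor-product functor for $A$, so $\otimes_\varepsilon\le\otimes_i$ by minimality of $\otimes_\varepsilon$. The reverse inequality $\otimes_i\le\otimes_\varepsilon$ is the heart of the matter and, I expect, the main obstacle: it says that every exact partial tensor-product functor $\tau$ for $A$ already dominates the injective-envelope norm, $\|x\|_\tau\ge\|x\|_{I(A)\otimes_{\max}B}$ for $x\in A\odot B$. The natural strategy is to choose a surjection $F\twoheadrightarrow B$ from a suitably ``large'' $C^*$-algebra $F$, use exactness of $\tau$ and of $\otimes_{\max}$ to realise $A\otimes_\tau B$ and $A\otimes_i B$ as compatible quotients, and so reduce to the case $B=F$; one then needs a supply of such $F$ for which exactness forces $A\otimes_\tau F\ge A\otimes_i F$ and such that every $B$ is a quotient of one. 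This is exactly the point where Kirchberg's structural results on exact $C^*$-algebras enter — local reflexivity and property~C, together with the behaviour of $\otimes_{\max}$ relative to injective $C^*$-algebras — and it is the genuinely non-elementary ingredient. Once $\otimes_i=\otimes_\varepsilon$ is in hand, any tensor-product functor injective in the first variable is $\le\otimes_i$ and any one exact in the second variable is $\ge\otimes_\varepsilon$, so $\otimes_{i,\varepsilon}$ is the unique functor with both properties, and functoriality for completely positive maps in both variables has already been built into the model.
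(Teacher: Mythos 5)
Your explicit model is essentially the paper's: the paper defines $A\otimes_{i,\varepsilon}B$ as the closure of $A\odot B$ inside $\mathcal B(H)\otimes_{\max}B$ for an embedding $A\hookrightarrow\mathcal B(H)$, and your choice of the injective envelope $I(A)$ in place of $\mathcal B(H)$ is an equivalent variant (the two norms agree by Arveson-type extension, exactly as in the paper's independence-of-embedding argument). Your treatment of cp-functoriality, injectivity/maximality in the first variable, and exactness in the second variable (the approximate-unit computation of the kernel) matches the paper's Claims 2--4 and its Lemma \ref{lem:exact} in substance, modulo the unital/non-unital bookkeeping that the paper handles via unitization and the $3\times 3$-Lemma.

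However, there is a genuine gap precisely where you yourself flag ``the heart of the matter'': minimality of $A\otimes_{i,\varepsilon}-$ among exact partial tensor-product functors is not proved, only a strategy is sketched, and the decisive ingredient is misidentified. The paper's argument is: given an exact functor $A\otimes_\alpha-$ and unital $B$, choose a surjection $C^*(F_X)\twoheadrightarrow B$ with kernel $I$; since every partial tensor-product functor dominates the minimal one, there is a canonical map
\[
A\otimes_\alpha C^*(F_X)\to A\otimes C^*(F_X)\xrightarrow{\iota\otimes\id}\mathcal B(H)\otimes C^*(F_X)=\mathcal B(H)\otimes_{\max}C^*(F_X),
\]
where the last equality is Kirchberg's theorem (unique $C^*$-norm on $\mathcal B(H)\odot C^*(F_X)$); this map sends $A\otimes_\alpha I$ into $\mathcal B(H)\otimes_{\max}I$, and exactness of both $A\otimes_\alpha-$ and $\mathcal B(H)\otimes_{\max}-$ then induces the quotient map $A\otimes_\alpha B\to A\otimes_{i,\varepsilon}B$ (non-unital $B$ via unitization). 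Your sketch names ``local reflexivity and property C,'' but what is actually needed and used is this $\mathcal B(H)$--$C^*(F)$ uniqueness theorem together with the fact that $\alpha\geq\min$; without carrying out this step the central claim (1) is unestablished. A secondary unproven assertion in your setup is the abstract existence of a minimal exact functor $A\otimes_\varepsilon-$ via ``ideal-theoretic joins'': the infimum of exact tensor norms is not obviously exact, and the paper avoids this entirely by proving directly that its explicit functor is exact and dominated by every exact functor, which yields minimality without any lattice-theoretic argument.
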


In terms of Kirchberg's tensor product, we can describe $-\rtimes_{\inj}G$ and $-\rtimes_{\corr}G$ as follows:

\begin{thmintro}[{Theorem \ref{thm:embedding1}}]
	Let $G$ be a discrete group and let $A$ be a $G$-$C^*$-algebra. Then there are injective $*$-homomorphisms
	\begin{enumerate}
		\item $A\rtimes_{\inj}G\hookrightarrow (A\rtimes_r G)\otimes_{i,\varepsilon}C^*(G)$
		\item  $A\rtimes_{\corr}G\hookrightarrow C^*_r(G)\otimes_{i,\varepsilon}(A\rtimes G)$
	\end{enumerate}
	given by $a\delta_g\mapsto a\delta_g\otimes \delta_g$ and $a\delta_g\mapsto \delta_g\otimes a\delta_g$ respectively. 
\end{thmintro}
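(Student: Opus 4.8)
The plan is to exhibit the two right-hand sides as the ranges of explicitly defined crossed-product functors and then to identify those functors with $-\rtimes_{\inj}G$ and $-\rtimes_{\corr}G$, playing the universal properties of the latter against the universal properties of $-\otimes_{i,\varepsilon}-$ in Theorem~\ref{thm:kirchberg}. The formulas $a\delta_g\mapsto a\delta_g\otimes\delta_g$ and $a\delta_g\mapsto\delta_g\otimes a\delta_g$ visibly define $*$-homomorphisms on $A\rtimes_{\alg}G$, and since the targets are $C^*$-algebras these extend to $*$-homomorphisms $\iota_\sigma\colon A\rtimes G\to(A\rtimes_r G)\otimes_{i,\varepsilon}C^*(G)$ and $\iota_\tau\colon A\rtimes G\to C^*_r(G)\otimes_{i,\varepsilon}(A\rtimes G)$; write $A\rtimes_\sigma G$ and $A\rtimes_\tau G$ for their ranges, viewed as $C^*$-completions of $A\rtimes_{\alg}G$ (that $\iota_\sigma,\iota_\tau$ are injective on $A\rtimes_{\alg}G$ follows from the existence of slice maps for $\otimes_{i,\varepsilon}$, which in turn comes from functoriality for completely positive maps). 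One checks that $\rtimes_\sigma,\rtimes_\tau$ are crossed-product functors: functoriality in $A$ and the bound by $\rtimes$ are immediate from naturality of the two maps and the fact that they factor through $A\rtimes G$; for $\rtimes_\sigma\geq\rtimes_r$ compose $\iota_\sigma$ with $\id\otimes\varepsilon$ for the trivial representation $\varepsilon\colon C^*(G)\to\C$ (legitimate by functoriality of $\otimes_{i,\varepsilon}$) to recover the regular representation; for $\rtimes_\tau\geq\rtimes_r$ compose $\iota_\tau$ with $\id_{C^*_r(G)}\otimes(A\rtimes G\to A\rtimes_r G)$ and with the canonical surjection onto $C^*_r(G)\otimes_{\min}(A\rtimes_r G)$, obtaining the reduced dual coaction $A\rtimes_r G\hookrightarrow C^*_r(G)\otimes_{\min}(A\rtimes_r G)$, which is isometric. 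So it remains to prove $\rtimes_\sigma=\rtimes_{\inj}$ and $\rtimes_\tau=\rtimes_{\corr}$.

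One inequality in each case comes from the defining universal property. For $\rtimes_\sigma$ I would check injectivity: an equivariant injection $A\hookrightarrow B$ induces an injection $A\rtimes_r G\hookrightarrow B\rtimes_r G$, hence, by injectivity of $-\otimes_{i,\varepsilon}C^*(G)$ in the first variable, an injection $(A\rtimes_r G)\otimes_{i,\varepsilon}C^*(G)\hookrightarrow(B\rtimes_r G)\otimes_{i,\varepsilon}C^*(G)$ restricting to $A\rtimes_\sigma G\to B\rtimes_\sigma G$; thus $\rtimes_\sigma$ is injective and hence $\rtimes_\sigma\leq\rtimes_{\inj}$. For $\rtimes_\tau$ I would check exactness and that it is a correspondence functor: the maximal crossed product is exact, so an equivariant extension $0\to I\to A\to A/I\to 0$ yields $0\to I\rtimes G\to A\rtimes G\to(A/I)\rtimes G\to 0$; tensoring on the left with $C^*_r(G)$ preserves exactness because $C^*_r(G)\otimes_{i,\varepsilon}-$ is exact in the second variable (Theorem~\ref{thm:kirchberg}), and one then identifies $I\rtimes_\tau G$ with $A\rtimes_\tau G\cap\bigl(C^*_r(G)\otimes_{i,\varepsilon}(I\rtimes G)\bigr)$ — again via slice maps for $\otimes_{i,\varepsilon}$. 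That $\rtimes_\tau$ is a correspondence functor follows from the dual coaction that $A\rtimes_\tau G$ inherits from the topological $G$-grading of $C^*_r(G)\otimes_{i,\varepsilon}(A\rtimes G)$ on the right-hand factor (or by checking descent of equivariant correspondences directly), giving $\rtimes_{\corr}\leq\rtimes_\tau$.

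The heart of the proof is the reverse domination, and here the two parts genuinely diverge. For part (1), let $\mu$ be \emph{any} injective crossed-product functor. The inclusion $A\hookrightarrow A\rtimes_r G$ is $G$-equivariant for the inner action $\Ad u$ of $G$ on $A\rtimes_r G$ by the canonical unitary multipliers, so injectivity of $\mu$ yields an embedding $A\rtimes_\mu G\hookrightarrow(A\rtimes_r G,\Ad u)\rtimes_\mu G$. Untwisting the inner action (crossed-product functors respect exterior equivalence) identifies $(A\rtimes_r G,\Ad u)\rtimes_\mu G$ with $(A\rtimes_r G)\rtimes_{\mu,\mathrm{triv}}G$, which has the form $(A\rtimes_r G)\otimes_{\nu_\mu}C^*(G)$ for the partial tensor-product functor $\nu_\mu\colon D\mapsto D\rtimes_{\mu,\mathrm{triv}}G$ for $C^*(G)$; tracing through the identifications, the composite $A\rtimes_\mu G\hookrightarrow(A\rtimes_r G)\otimes_{\nu_\mu}C^*(G)$ is exactly $a\delta_g\mapsto a\delta_g\otimes\delta_g$. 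Since $\mu$ is injective, $\nu_\mu$ is an injective partial tensor-product functor for $C^*(G)$ and hence is dominated by the maximal one $-\otimes_{i,\varepsilon}C^*(G)$ (Theorem~\ref{thm:kirchberg}); therefore, for $x\in A\rtimes_{\alg}G$, $\|x\|_\mu=\|\iota_\sigma(x)\|_{(A\rtimes_r G)\otimes_{\nu_\mu}C^*(G)}\leq\|\iota_\sigma(x)\|_{(A\rtimes_r G)\otimes_{i,\varepsilon}C^*(G)}=\|x\|_\sigma$. Taking $\mu=\rtimes_{\inj}$ gives $\rtimes_{\inj}\leq\rtimes_\sigma$, hence $\rtimes_\sigma=\rtimes_{\inj}$, and $\iota_\sigma$ descends to the asserted injection $A\rtimes_{\inj}G\hookrightarrow(A\rtimes_r G)\otimes_{i,\varepsilon}C^*(G)$.

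For part (2) one needs, dually, $\rtimes_\tau\leq\mu$ for every exact correspondence functor $\mu$, i.e.\ that $\iota_\tau$ factors through $A\rtimes_\mu G$. This is the step I expect to be the main obstacle, because the untwisting trick does not transpose verbatim: $C^*_r(G)$, unlike $C^*(G)$, is the \emph{smallest} group completion, so a trivial-action crossed product no longer reads off a cross norm on $C^*_r(G)\odot D$. Instead I would use that a correspondence functor is injective and carries a dual coaction $\widehat{\alpha}_\mu$, and invoke Takai-type duality for exotic crossed products to manufacture from $\mu$ an \emph{exact} partial tensor-product functor $\nu_\mu'$ for $C^*_r(G)$ together with a $*$-homomorphism $A\rtimes_\mu G\to C^*_r(G)\otimes_{\nu_\mu'}(A\rtimes G)$ implementing $a\delta_g\mapsto\delta_g\otimes a\delta_g$. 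Then, $C^*_r(G)\otimes_{i,\varepsilon}-$ being the \emph{minimal} exact partial tensor-product functor for $C^*_r(G)$ (Theorem~\ref{thm:kirchberg}), it is dominated by $\nu_\mu'$, whence $\|x\|_\tau\leq\|x\|_{C^*_r(G)\otimes_{\nu_\mu'}(A\rtimes G)}\leq\|x\|_\mu$; taking $\mu=\rtimes_{\corr}$ yields $\rtimes_\tau=\rtimes_{\corr}$ and the second injection. The construction of $\nu_\mu'$ and the verification that $\iota_\tau$ factors through $A\rtimes_\mu G$ — where all the dual-coaction bookkeeping lives — is, I expect, the technically heaviest point, and the only part of the whole argument that is not a fairly direct manipulation of the two universal properties.
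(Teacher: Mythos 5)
The decisive gap is in part (2): you never actually produce the embedding, you only announce that one should ``invoke Takai-type duality for exotic crossed products'' to manufacture an exact partial tensor-product functor $\nu_\mu'$ for $C^*_r(G)$ together with a map $A\rtimes_\mu G\to C^*_r(G)\otimes_{\nu_\mu'}(A\rtimes G)$, and you yourself flag this as the unproven heavy step. It is also not where the paper goes, and the obstacle you identify dissolves once one gives up on making the group factor $C^*_r(G)$ at the intermediate stage. The paper uses a Fell-absorption lemma valid for \emph{every} crossed-product functor $\rtimes_\mu$ with the cp-map property and every functor $\rtimes_\rho$ (\cite[Lem.~2.10]{MR4273185}, Lemma~\ref{prop:Fell}): $a\delta_g\mapsto a\delta_g\otimes\delta_g$ embeds $A\rtimes_\mu G$ into $(A\rtimes_\rho G)\otimes_\mu C^*_\mu(G)$, where $\otimes_\mu$ is the trivial-action $\mu$-crossed-product norm. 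Applied with $\mu=\mathrm{corr}$ and $\rho=\max$ this gives $A\rtimes_{\corr}G\hookrightarrow C^*_{\corr}(G)\otimes_{\corr}(A\rtimes G)$ --- the group factor is $C^*_{\corr}(G)$, not $C^*_r(G)$, and that is fine: since $B\mapsto C^*_{\corr}(G)\otimes_{\corr}B:=B\rtimes_{\corr,\mathrm{triv}}G$ is exact in $B$, Kirchberg minimality of $C^*_{\corr}(G)\otimes_{i,\varepsilon}-$ among exact partial tensor-product functors for $C^*_{\corr}(G)$ gives $C^*_{\corr}(G)\otimes_{i,\varepsilon}-\leq C^*_{\corr}(G)\otimes_{\corr}-$, and functoriality of $\otimes_{i,\varepsilon}$ in the first variable along the quotient $C^*_{\corr}(G)\twoheadrightarrow C^*_r(G)$ gives $C^*_r(G)\otimes_{i,\varepsilon}-\leq C^*_{\corr}(G)\otimes_{i,\varepsilon}-$. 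Chaining these yields $\rtimes_\tau\leq\rtimes_{\corr}$ with no duality machinery at all. Without this (or a genuinely worked-out substitute) your part (2) is incomplete, and building $\nu_\mu'$ over $C^*_r(G)$ from an abstract exact correspondence functor is precisely the kind of construction that does not come for free, for the reason you note yourself.

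Your part (1) is essentially the paper's argument, but two steps need repair. First, ``crossed-product functors respect exterior equivalence'' is false in that generality; what saves you is that an injective functor has the cp-map property (\cite[Thm.~4.9]{buss18}), and untwisting the inner action $\Ad u$ on $A\rtimes_rG$ at the level of $\mu$-completions is then essentially the content of the cited Fell-absorption lemma --- it deserves a proof or a citation, not a parenthesis, since it is the whole point of the reverse inequality. Second, your $\nu_\mu\colon D\mapsto D\rtimes_{\mu,\mathrm{triv}}G$ is a partial tensor-product functor for $C^*_\mu(G)$, not for $C^*(G)$ (one even needs the cp-map property to know the canonical map $D\odot C^*_\mu(G)\to D\rtimes_{\mu,\mathrm{triv}}G$ is injective, cf.\ \cite[Prop.~2.8]{MR4273185}), so the comparison with $-\otimes_{i,\varepsilon}C^*(G)$ must pass through the chain $-\otimes_{i,\varepsilon}C^*(G)\geq-\otimes_{i,\varepsilon}C^*_\mu(G)\geq-\otimes_\mu C^*_\mu(G)$, the first inequality by second-variable functoriality along $C^*(G)\twoheadrightarrow C^*_\mu(G)$ and the second by Kirchberg maximality; this is exactly the chain in the paper's proof of Theorem~\ref{thm:embedding1}.
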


We obtain an even more concrete picture using $G$-injective $G$-$C^*$-algebras (see p.\pageref{defn:injective} for the definition). Note that $G$-injective $G$-$C^*$-algebras are always unital.
\begin{propintro}[{Proposition \ref{thm:embedding2}}]
	Let $G$ be a discrete group, let $A$ be a $G$-$C^*$-algebra and let $I$ be a $G$-injective $G$-$C^*$-algebra (e.g. $I=\ell^\infty(G)$). Then the canonical embedding $A\hookrightarrow A\otimes_{\max} I, a\mapsto a\otimes 1$ induces an injective $*$-homomorphism
		\[A\rtimes_{\corr}G\hookrightarrow (A\otimes_{\max} I)\rtimes G.\]
\end{propintro}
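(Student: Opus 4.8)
The plan is to recognise the asserted map as an isomorphism of crossed-product functors. Write $B:=A\otimes_{\max}I$ and let $\iota\colon A\hookrightarrow B$, $a\mapsto a\otimes 1$; this is an injective $G$-equivariant $*$-homomorphism because $I$ is unital. Functoriality of the maximal crossed product gives $\iota\rtimes G\colon A\rtimes G\to B\rtimes G$, $a\delta_g\mapsto(a\otimes 1)\delta_g$; let $J_\iota$ be its kernel and $J_{\corr}$ the kernel of $A\rtimes G\twoheadrightarrow A\rtimes_{\corr}G$. The proposition is exactly the statement $J_\iota=J_{\corr}$: the inclusion $J_{\corr}\subseteq J_\iota$ produces the $*$-homomorphism $A\rtimes_{\corr}G\to B\rtimes G$, and $J_\iota\subseteq J_{\corr}$ makes it injective. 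Equivalently, letting $\mu$ be the crossed-product functor with $\|x\|_\mu:=\|(\iota\rtimes G)(x)\|_{B\rtimes G}$ for $x\in A\rtimes_{\alg}G$ — a genuine crossed-product norm, since $\iota\rtimes G$ is contractive and $\iota$ followed by $B\to A\otimes_{\min}I$ is an injective $G$-equivariant embedding, so that $\|\cdot\|_r\le\|\cdot\|_\mu\le\|\cdot\|_{\max}$ — it suffices to prove $\mu=\rtimes_{\corr}$.

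For $\mu\ge\rtimes_{\corr}$ I would show that $\mu$ is an exact correspondence crossed-product functor and invoke minimality of $\rtimes_{\corr}$. The maximal tensor product $-\otimes_{\max}I$ is exact (it preserves short exact sequences) and Morita compatible, and so is $-\rtimes_{\max}G$; hence $A\mapsto B\rtimes G$ is an exact correspondence functor, and one transports these properties to the "image subfunctor" $\mu$ by the same bookkeeping used to construct $\rtimes_{\mathcal E}G$ and $\rtimes_{\corr}G$ in \cite{buss15,buss18} (a short diagram chase, using also exactness of $\rtimes_{\max}G$ itself).

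The reverse inequality $\mu\le\rtimes_{\corr}$ is the crux, and it is here that $G$-injectivity really enters. First I would reduce to $I=\ell^\infty(G)$: since $I$ and $\ell^\infty(G)$ are both unital and $G$-injective, extending the unital inclusions of $\C$ along $\C\hookrightarrow I$ and $\C\hookrightarrow\ell^\infty(G)$ yields $G$-equivariant unital completely positive maps $\sigma\colon\ell^\infty(G)\to I$ and $\theta\colon I\to\ell^\infty(G)$; tensoring with $\id_A$ and applying the maximal crossed product — which is functorial for $G$-equivariant completely positive maps — gives contractive maps between $(A\otimes_{\max}I)\rtimes G$ and $(A\otimes_{\max}\ell^\infty(G))\rtimes G$ that fix $(a\otimes 1)\delta_g$, whence $\|\cdot\|_{\mu_I}=\|\cdot\|_{\mu_{\ell^\infty(G)}}$ on $A\rtimes_{\alg}G$. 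So assume $I=\ell^\infty(G)$. By Theorem \ref{thm:embedding1}(2) it is then enough to construct a $*$-homomorphism
\[
\Phi\colon C^*_r(G)\otimes_{i,\varepsilon}(A\rtimes G)\longrightarrow\bigl(A\otimes_{\max}\ell^\infty(G)\bigr)\rtimes G,\qquad \delta_g\otimes a\delta_g\longmapsto(a\otimes 1)\delta_g,
\]
because then $\Phi$ composed with the embedding of Theorem \ref{thm:embedding1}(2) is a $*$-homomorphism $A\rtimes_{\corr}G\to(A\otimes_{\max}\ell^\infty(G))\rtimes G$ extending $a\delta_g\mapsto(a\otimes 1)\delta_g$, i.e. $\|\cdot\|_\mu\le\|\cdot\|_{\corr}$. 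I expect constructing $\Phi$ to be the main obstacle. Note that the tautological covariant pair $\bigl(a\mapsto a\otimes 1,\ g\mapsto\delta_g\bigr)$ for $(A,G)$ in the multiplier algebra of $(A\otimes_{\max}\ell^\infty(G))\rtimes G$ recovers exactly the norm $\|\cdot\|_\mu$ and is equipped with a $G$-equivariant commuting copy of $\ell^\infty(G)$; the goal is to see that this forces it to be controlled by the $i,\varepsilon$-tensor norm, hence to be a $\rtimes_{\corr}$-representation. The leverage for this should be, on one side, that $\ell^\infty(G)$ is nuclear (so $A\otimes_{\max}\ell^\infty(G)=A\otimes_{\min}\ell^\infty(G)$) and, on the other, the universal property of Kirchberg's functor in Theorem \ref{thm:kirchberg} — injectivity in the first variable, exactness in the second, and $C^*$-functoriality for completely positive maps — which is precisely what certifies the existence of a representation of $C^*_r(G)\otimes_{i,\varepsilon}(A\rtimes G)$ with the prescribed values. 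Granting $\Phi$, the two inequalities give $\mu=\rtimes_{\corr}$, and the tautological isometric embedding $A\rtimes_{\corr}G=A\rtimes_\mu G\hookrightarrow(A\otimes_{\max}I)\rtimes G$ from the first paragraph is the required injective $*$-homomorphism.
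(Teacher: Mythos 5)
Your setup and the easy direction ($\mu\geq\rtimes_{\corr}$ via exactness and the cp-map property of the image functor inside $(A\otimes_{\max}I)\rtimes G$) follow the paper's route, although the ``short diagram chase'' you defer is precisely the paper's Lemma \ref{lem:exact} (the only nontrivial point, namely that the kernel of the middle arrow of the image subfunctor lands in the ideal). The genuine gap is in the crux you yourself flag: the map $\Phi$ is never constructed, and the ingredients you name would not produce it. Nuclearity of $\ell^\infty(G)$ plays no role, and the universal property of $-\otimes_{i,\varepsilon}-$ (injectivity in the first variable, exactness in the second, cp-functoriality) does not ``certify the existence of a representation with the prescribed values'' $\delta_g\otimes a\delta_g\mapsto(a\otimes1)\delta_g$; abstract functoriality gives you no covariant representation with prescribed diagonal behaviour. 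What actually makes this work in the paper is concrete: (i) by the very definition of Kirchberg's tensor product, $C^*_r(G)\otimes_{i,\varepsilon}(A\rtimes G)$ sits inside $\mathcal B(\ell^2(G))\otimes_{\max}(A\rtimes G)$ via $\lambda\otimes\id$; (ii) the Fell-type untwisting isomorphism
\[
\Psi\colon \mathcal B(\ell^2(G))\otimes_{\max}(A\rtimes G)\xrightarrow{\ \cong\ }\bigl(\mathcal B(\ell^2(G))\otimes_{\max}A\bigr)\rtimes G,\qquad T\otimes a\delta_g\mapsto T\lambda_{g^{-1}}\otimes a\delta_g,
\]
where $\mathcal B(\ell^2(G))$ carries the adjoint action of $\lambda$; and (iii) a $G$-equivariant ucp map $\varphi\colon\mathcal B(\ell^2(G))\to I$ supplied directly by $G$-injectivity of $I$ (so your reduction to $I=\ell^\infty(G)$ is superfluous), followed by $(\varphi\otimes\id_A)\rtimes G$, using cp-functoriality of the maximal crossed product. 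The resulting composition $\kappa$ is only a contractive completely positive map, not a $*$-homomorphism, but since $\kappa(a\delta_g)=(a\otimes1)\delta_g$ on the algebraic crossed product, contractivity already yields $\|\cdot\|_\mu\leq\|\cdot\|_{\corr}$; insisting on a $*$-homomorphism $\Phi$, as you do, is both unnecessary and not something the stated universal properties can deliver. Without step (ii) in particular--the untwisting that converts the spatial copy of $G$ in the first tensor factor into the copy acting in the crossed product--your ``tautological covariant pair'' argument has no mechanism to connect the $i,\varepsilon$-norm to the norm of $(A\otimes_{\max}I)\rtimes G$, so as written the proof of the essential inequality is missing.
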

Note that for $I=\ell^\infty(G)$, this provides a positive solution to a question asked in \cite[Question 9.4]{buss18} and \cite[8.2]{baum15}. As an application, we are able to compare $-\rtimes_{\inj}G$ and $-\rtimes_{\corr}G$:
\begin{corintro}[{Corollary \ref{cor:compare}}]
	For any discrete group $G$, we have $-\rtimes_{\inj}G\leq -\rtimes_{\corr}G$ and $C^*_{\inj}(G)=C^*_{\corr}(G)$. 
\end{corintro}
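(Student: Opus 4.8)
The plan is to derive the inequality $-\rtimes_{\inj}G\leq -\rtimes_{\corr}G$ directly from Proposition~\ref{thm:embedding2}, using only two soft facts: that $-\rtimes_{\inj}G$ is an \emph{injective} crossed-product functor (a result of Buss--Echterhoff--Willett), and that the full crossed product dominates every crossed-product functor. Fix a $G$-injective -- hence unital -- $G$-$C^*$-algebra $I$, for instance $I=\ell^\infty(G)$. The map $\iota\colon A\to A\otimes_{\max}I$, $a\mapsto a\otimes 1$, is an injective $G$-equivariant $*$-homomorphism: it is isometric because $\|\cdot\|_{\max}$ is a cross norm, and $G$-equivariant because the $G$-action on $I$ fixes the unit.

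First I would apply injectivity of $-\rtimes_{\inj}G$ to $\iota$, so that the induced $*$-homomorphism $A\rtimes_{\inj}G\to(A\otimes_{\max}I)\rtimes_{\inj}G$ is injective, hence isometric. Consequently, for $x\in A\rtimes_{\alg}G$ the $\inj$-norm of $x$ equals the $\inj$-norm of its image $\bar x\in(A\otimes_{\max}I)\rtimes_{\alg}G$ under $a\delta_g\mapsto(a\otimes 1)\delta_g$. Since $(A\otimes_{\max}I)\rtimes G$ surjects onto $(A\otimes_{\max}I)\rtimes_{\inj}G$, this is at most $\|\bar x\|_{(A\otimes_{\max}I)\rtimes G}$; and Proposition~\ref{thm:embedding2} identifies $A\rtimes_{\corr}G$ isometrically with the closed span of the elements $(a\otimes 1)\delta_g$ inside $(A\otimes_{\max}I)\rtimes G$, so that last norm is precisely $\|x\|_{\corr}$. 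Chaining these bounds yields $\|x\|_{\inj}\leq\|x\|_{\corr}$ for all $x$, i.e.\ $-\rtimes_{\inj}G\leq -\rtimes_{\corr}G$; equivalently, the quotient map $A\rtimes G\twoheadrightarrow A\rtimes_{\inj}G$ factors through $A\rtimes_{\corr}G$. For the group $C^*$-algebras I would instead specialize Theorem~\ref{thm:embedding1} to $A=\C$: there both embeddings land in the single algebra $C^*_r(G)\otimes_{i,\varepsilon}C^*(G)$ via the same formula $\delta_g\mapsto\lambda_g\otimes\delta_g$, so $C^*_{\inj}(G)$ and $C^*_{\corr}(G)$ are both the closure of $\operatorname{span}\{\lambda_g\otimes\delta_g:g\in G\}$ and therefore coincide.

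Since Proposition~\ref{thm:embedding2} is already available, there is no real obstacle left; the only external input is the injectivity of $-\rtimes_{\inj}G$ used in the middle step. The point to be careful about is that one cannot squeeze an equality out of this argument: that would force $(A\otimes_{\max}I)\rtimes_{\inj}G=(A\otimes_{\max}I)\rtimes G$, i.e.\ amenability of the diagonal action $G\acton A\otimes_{\max}I$, which already fails for $A=\C$ whenever $G$ is non-exact, as $G\acton\ell^\infty(G)$ is then not amenable. The step where one passes from the $\inj$-completion up to the full completion of $(A\otimes_{\max}I)\rtimes_{\alg}G$ is exactly where a possible strict inequality $-\rtimes_{\inj}G<-\rtimes_{\corr}G$ would enter.
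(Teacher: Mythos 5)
Your argument is correct, and it splits into two parts of different flavour relative to the paper. For the inequality $-\rtimes_{\inj}G\leq -\rtimes_{\corr}G$ you argue exactly as the paper does: injectivity of $-\rtimes_{\inj}G$ applied to $A\hookrightarrow A\otimes_{\max}I$ gives $A\rtimes_{\inj}G\hookrightarrow (A\otimes_{\max}I)\rtimes_{\inj}G$, and chaining this with the quotient from $(A\otimes_{\max}I)\rtimes G$ and the isometric identification of $A\rtimes_{\corr}G$ from Proposition \ref{thm:embedding2} gives $\|x\|_{\inj}\leq\|x\|_{\corr}$ on $A\rtimes_{\alg}G$. For the equality $C^*_{\inj}(G)=C^*_{\corr}(G)$ you take a genuinely different route: the paper reruns the same embedding argument for $A=\C$ and then invokes the external fact $I\rtimes G=I\rtimes_{\inj}G$ for $G$-injective $I$ (Cor.\ 3.3 of Buss--Echterhoff--Willett), whereas you specialize Theorem \ref{thm:embedding1} to $A=\C$ and observe that both embeddings land in the \emph{same} algebra $C^*_r(G)\otimes_{i,\varepsilon}C^*(G)$ with the same image $\overline{\operatorname{span}}\{\lambda_g\otimes\delta_g\}$; since injective $*$-homomorphisms are isometric, the two completions of $\C[G]$ coincide. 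Your version is self-contained within the results already proved in the paper and avoids the extra citation, at the (small) cost of not recording the useful intermediate fact about $G$-injective coefficients.

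One point in your closing remark is wrong and worth correcting: the equality $(A\otimes_{\max}I)\rtimes_{\inj}G=(A\otimes_{\max}I)\rtimes G$ is \emph{not} the same as amenability of the diagonal action, and it does \emph{not} fail for $A=\C$ and non-exact $G$. Amenability of $G\acton\ell^\infty(G)$ governs whether $\ell^\infty(G)\rtimes G=\ell^\infty(G)\rtimes_r G$; the statement relevant here compares the maximal crossed product with the \emph{maximal injective} one, and by the very result the paper cites (Cor.\ 3.3 of \cite{buss20}) one has $I\rtimes_{\inj}G=I\rtimes G$ for every $G$-injective $I$, in particular for $\ell^\infty(G)$, regardless of exactness of $G$. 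So the obstruction you point to in the middle step of your chain is not actually present for $A=\C$ (consistently with the equality $C^*_{\inj}(G)=C^*_{\corr}(G)$ you prove); whether the inequality is strict for general $A$ remains open for a different reason, namely that no functor both exact and injective is known. This slip does not affect the validity of your proof of the corollary itself.
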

Thus, in order to prove that $-\rtimes_{\inj}G$ and $-\rtimes_{\corr}G$ coincide, it would suffice to construct a crossed-product functor which is both exact and injective.

\subsection*{Acknowledgements}
The authors would like to thank Siegfried Echterhoff for helpful discussions and comments and the anonymous referee for pointing out an error in a previous version of this article. 

\section{Preliminaries}\label{sec:preliminaries}

In this section we fix some terminology regarding crossed-product and tensor-product functors. For definitions and basic properties of crossed products and tensor products we refer to \cite{brownozawa,williams07}.

Let ${}^*\textbf{Alg}$ denote the category of $*$-algebras with $*$-homomorphisms as morphisms and let $C^*\textbf{Alg}$ denote the full subcategory of $C^*$-algebras. For a discrete group $G$, we denote by $C^*\textbf{Alg}_G$ the category of $G$-$C^*$-algebras with $G$-equivariant $*$-homomorphisms. 

Let $\mathcal C$ be a category. A functor $F^{\mu}\colon\mathcal C\to C^*\textbf{Alg}$ is a \emph{$C^*$-completion} of a functor $F\colon\mathcal C\to {}^*\textbf{Alg}$, if for every object $X$ in $\mathcal C$, $F^\mu(X)$ is a $C^*$-completion of $F(X)$ and if for every morphism $f$ in $\mathcal C$, $F^\mu(f)$ is an extension of $F(f)$. We define a partial order on the class of $C^*$-completions of a given functor $F$ by declaring $F^\mu \geq F^\nu$ if for every object $X$ in $\mathcal C$, the identity on $F(X)$ extends to a $*$-homomorphism $F^\mu(X)\to F^\nu(X)$. 

For two $C^*$-algebras $A$ and $B$, we denote by $A\odot B$ the algebraic tensor product, by $A\otimes_{\max}B$ the maximal tensor product and by $A\otimes B$ the minimal tensor product. A \emph{tensor-product functor} $-\otimes_\alpha -$ is a $C^*$-completion of the functor 
	\[-\odot -\colon C^*\textbf{Alg}\times C^*\textbf{Alg}\to {}^*\textbf{Alg}.\]
A \emph{partial tensor-product functor $-\otimes_\alpha B$ for $B$} is a $C^*$-completion of the functor 
	\[-\odot B\colon{C}^*\textbf{Alg}\to {}^*\textbf{Alg}.\]
A partial tensor-product functor $-\otimes_\alpha B$ is 
\begin{enumerate}
	\item called \emph{exact} if it maps exact sequences to exact sequences;
	\item called \emph{injective} if it maps injective $*$-homomorphisms to injective $*$-homomorphisms;
	\item said to have the \emph{cp-map property} if for each completely positive map $\varphi\colon A\to C$, the induced map $\varphi\odot \id_B\colon A\odot B\to C\odot B$ extends to a completely positive map $\varphi\otimes_\alpha \id_B\colon A\otimes_\alpha B\to C\otimes_{\alpha}B$.
\end{enumerate}
Every (partial) tensor-product functor is dominated by the maximal tensor-product functor and dominates the minimal tensor-product functor. For a fixed $C^*$-algebra $B$, the functor $-\otimes_{\max} B$ is exact \cite[Prop.\,3.7.1]{brownozawa} whereas the functor $-\otimes B$ is injective. Both functors have the cp-map property \cite[Thm.\,3.5.3]{brownozawa}.

For a discrete group $G$ and a $G$-$C^*$-algebra $A$, we denote by $A\rtimes_{\alg}G=A[G]$ the algebraic crossed product, by $A\rtimes G$ the maximal crossed product and by $A\rtimes_r G$ the reduced crossed product. A \emph{crossed-product functor} $-\rtimes_\mu G$ is a $C^*$-completion of the algebraic crossed-product functor
	\[-\rtimes_{\alg}G\colon C^*\textbf{Alg}_G\to {}^*\textbf{Alg}\]
which dominates the reduced crossed product. We write $C^*_\mu(G):=\C\rtimes_\mu G$. A crossed-product functor $-\rtimes_\mu G$ is
\begin{enumerate}
	\item called \emph{exact} if it maps exact sequences to exact sequences;
	\item called \emph{injective} if it maps injective $G$-equivariant $*$-homomorphisms to injective $*$-homomorphisms;
	\item said to have the \emph{cp-map property} if for each $G$-equivariant completely positive map $\varphi\colon A\to B$, the induced map $\varphi\rtimes_{\alg}G\colon A\rtimes_{\alg}G\to B\rtimes_{\alg}G$ extends to a completely positive map $\varphi \rtimes_\mu G\colon A\rtimes_\mu G\to B\rtimes_\mu G$. 
\end{enumerate}
Every crossed-product functor is dominated by the maximal crossed product and dominates the reduced crossed product. The maximal crossed product $-\rtimes G$ is exact \cite[Prop.\,4.8]{echterhoff2017crossed} and the reduced crossed product $-\rtimes_r G$ is injective \cite[Lem.\,A.16]{MR2203930}. Both functors have the cp-map property \cite[Lem.\,4.8]{buss2019injectivity}. Every injective crossed-product functor has the cp-map property \cite[Thm.\,4.9]{buss18}. Moreover there is a maximal injective crossed-product functor $-\rtimes_{\inj}G$ \cite[Prop.\,3.5]{buss20} and a minimal exact crossed-product functor with the cp-map property $-\rtimes_{\corr}G$ \cite[Cor.\,8.8]{buss18}.

\begin{rem}
	It was shown in \cite[Thm.\,4.9]{buss18} that a crossed-product functor has the cp-map property if and only if it extends to a functor on the $G$-equivariant correspondence category $\mathfrak{Corr}(G)$ as defined in \cite[Def.\,4.4]{buss18}. Therefore crossed-product functors with the cp-map property are called \emph{correspondence crossed-product functors} in \cite{buss18} and $-\rtimes_{\corr}G$ is called the minimal exact \emph{correspondence} crossed-product functor. 
	One can prove a similar characterization for partial tensor-product functors. %We include a proof in the appendix. 
\end{rem}

A $G$-$C^*$-algebra $I$ is called \emph{$G$-injective}\label{defn:injective} if for every injective $G$-equivariant $*$-homomorphism $\iota \colon A\hookrightarrow B$ and every $G$-equivariant completely positive contractive map $\varphi\colon A\to I$, there is a $G$-equivariant completely positive contractive map $\overline{\varphi}\colon B\to I$ such that $\overline{\varphi}\circ \iota=\varphi$. We say that \emph{$\overline{\varphi}$ extends $\varphi$ along $\iota$}. In this case $I$ is unital since there exists a conditional expectation from the unitization $\tilde I$ onto $I$. 
\section{Exact and injective tensor-product functors}\label{sec:tensor product}
In this section we give a detailed proof of a theorem that was stated in \cite{MR1403994} for convenience of the reader. We need a folklore lemma.

\begin{lem}\label{lem:exact}
	Let 
		\[
			\begin{tikzcd}
				0\arrow[r] &I\arrow[r,"\iota"]\arrow[d,"\varphi_I"]	&A\arrow[d,"\varphi_A"]\arrow[r,"q"]	&B\arrow[d,"\varphi_B"]\arrow[r]	&0\\
								0\arrow[r] &I'\arrow[r,"\iota'"]	&A'\arrow[r,"q'"]	&B'\arrow[r]	&0
			\end{tikzcd}
		\]
	be a commutative diagram of $C^*$-algebras and $*$-homomorphisms. Assume that $\iota$ is an ideal inclusion, that the lower row is exact, and that the vertical maps are non-degenerate inclusions. Then we have $\ker q\subseteq \im(\iota)$.
\end{lem}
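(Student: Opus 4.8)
The plan is to reduce the statement to a computation with an approximate unit inside $A'$, where non-degeneracy of $\varphi_I$ does all the work. Since $\varphi_I,\varphi_A,\varphi_B$ are injective $*$-homomorphisms they are isometric, so I would freely identify $I$ with the ideal $\iota(I)\trianglelefteq A$, identify $A$ with $\varphi_A(A)\subseteq A'$, and---using exactness of the bottom row---identify $I'$ with $\ker q'=\iota'(I')\trianglelefteq A'$. Under these identifications, commutativity of the left-hand square says precisely that $\iota(I)\subseteq I'$ as subalgebras of $A'$, compatibly with $\varphi_I$. Hence it suffices to show: if $x\in A$ satisfies $q(x)=0$, then $x\in\iota(I)$.

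First I would locate $x$ inside the ideal $I'$ of $A'$: commutativity of the right-hand square gives $q'(\varphi_A(x))=\varphi_B(q(x))=0$, so $\varphi_A(x)\in\ker q'=I'$. Next I would fix an approximate unit $(u_\lambda)$ for $\iota(I)$. Because $\varphi_I$ is a non-degenerate inclusion, it carries an approximate unit of $I$ to an approximate unit of $I'$; translated through the identifications, the net $\bigl(\varphi_A(u_\lambda)\bigr)_\lambda$ is an approximate unit for the ideal $I'\trianglelefteq A'$. Since $\varphi_A(x)\in I'$, this yields $\varphi_A(u_\lambda)\,\varphi_A(x)\to\varphi_A(x)$ in $A'$. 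On the other hand $\varphi_A(u_\lambda)\,\varphi_A(x)=\varphi_A(u_\lambda x)$ and $u_\lambda x\in\iota(I)$ because $\iota(I)$ is an ideal in $A$. Therefore $\varphi_A(x)$ is a limit of elements of $\varphi_A(\iota(I))$, which is closed in $A'$ ($\varphi_A$ isometric, $\iota(I)$ closed), so $\varphi_A(x)\in\varphi_A(\iota(I))$ and hence $x\in\iota(I)=\im(\iota)$.

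I do not expect a genuine obstacle: the lemma is bookkeeping together with the standard fact that a non-degenerate $*$-homomorphism sends an approximate unit to a (strict) approximate unit. The step that requires the most care is making the three identifications mutually consistent---in particular, checking that $u_\lambda x$, built as a product in $A$, maps under $\varphi_A$ to the product of $\varphi_A(u_\lambda)$ with $\varphi_A(x)$ taken inside the ideal $I'\trianglelefteq A'$, which is exactly what lets non-degeneracy of $\varphi_I$ be applied. It is also worth recording that the argument uses only injectivity of $\varphi_A$ and $\varphi_B$ and non-degeneracy of $\varphi_I$.
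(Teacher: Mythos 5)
Your proof is correct and follows essentially the same route as the paper's: locate $\varphi_A(x)$ in $\ker q'=\im(\iota')$, use non-degeneracy of $\varphi_I$ to turn an approximate unit of $I$ into one of $I'$, and use isometry of $\varphi_A$ together with closedness of the ideal to conclude $x\in\im(\iota)$. (One small inaccuracy in your closing remark: injectivity of $\varphi_B$ is never actually needed, since you only use $\varphi_B(q(x))=\varphi_B(0)=0$; the argument relies solely on injectivity of $\varphi_A$ and non-degeneracy of $\varphi_I$.)
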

\begin{proof}
	Let $x\in \ker q$. By exactness, we find $y\in I'$ such that $\iota'(y)=\varphi_A(x)$. Let $(e_\lambda)_\lambda$ be an approximate unit for $I$. Since $\varphi_I$ is non-degenerate, $(\varphi_I(e_\lambda))_\lambda$ is an approximate unit for $I'$ and thus $\|\varphi_I(e_\lambda)y-y\|\to 0$. We obtain $\|\varphi_A\left(\iota(e_\lambda) x-x\right)\|=\|\iota'\left(\varphi_I(e_\lambda)y - y\right)\|\to 0$. This implies $\|\iota(e_\lambda)x-x\|\to 0$ because $\varphi_A$ is isometric and therefore $x\in \im (\iota)$ since $\iota$ is an ideal inclusion.
\end{proof}

\begin{thm}[\cite{MR1403994}]
	There is a tensor-product functor $-\otimes_{i,\varepsilon}-$ satisfying the following properties:
	\begin{enumerate}
		\item For every $C^*$-algebra $A$, $A\otimes_{i,\varepsilon}-$ is the minimal exact partial tensor-product functor for $A$.
		\item For every $C^*$-algebra $B$, $-\otimes_{i,\varepsilon}B$ is the maximal injective partial tensor-product functor for $B$.
	\end{enumerate}
	In particular, $-\otimes_{i,\varepsilon}-$ is the unique tensor-product functor which is injective in the first variable and exact in the second variable. Furthermore, $-\otimes_{i,\varepsilon}-$ has the cp-property in both variables.
\end{thm}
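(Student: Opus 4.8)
The plan is to realise $-\otimes_{i,\varepsilon}-$ concretely by building injectivity into the first variable through an injective envelope and reading off exactness in the second variable from the maximal tensor product. For $C^*$-algebras $A,B$ fix an embedding $A\hookrightarrow\mathcal I$ into an injective $C^*$-algebra $\mathcal I$ (for instance the injective envelope $\mathcal I(A)$, or $\mathcal B(H)$ for a faithful representation), and define $A\otimes_{i,\varepsilon}B$ to be the closure of $A\odot B$ inside $\mathcal I\otimes_{\max}B$. The first thing to check is that this is independent of $\mathcal I$ and that it is functorial with the cp-property in both variables: two injective $C^*$-algebras containing $A$ admit, by injectivity, completely positive contractive maps between them restricting to the identity on $A$, and tensoring these with $\mathrm{id}_B$ via the cp-property of $\otimes_{\max}$ produces maps on $\mathcal I\otimes_{\max}B$ which are mutually inverse on $A\odot B$; functoriality and the cp-property in $B$ are inherited verbatim from $\otimes_{\max}$, while in $A$ one extends a completely positive map $A\to A'\subseteq\mathcal I(A')$ to $\mathcal I(A)\to\mathcal I(A')$ by injectivity before tensoring with $\mathrm{id}_B$. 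Since the resulting norm on $A\odot B$ is squeezed between the minimal norm (via $\mathcal I\otimes_{\max}B\to\mathcal I\otimes B\supseteq A\otimes B$) and the maximal norm (via $A\otimes_{\max}B\to\mathcal I\otimes_{\max}B$), this defines a genuine tensor-product functor, and clearly $A\otimes_{i,\varepsilon}B=A\otimes_{\max}B$ when $A$ is injective.

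Next I would establish injectivity in the first variable together with maximality. Given $A_0\hookrightarrow A$, injectivity of $\mathcal I(A_0)$ and of $\mathcal I(A)$ yields completely positive contractive maps in both directions restricting to the identity on $A_0$; tensoring with $\mathrm{id}_B$ shows the two candidate norms on $A_0\odot B$ coincide, so $A_0\otimes_{i,\varepsilon}B\hookrightarrow A\otimes_{i,\varepsilon}B$. For maximality, if $-\otimes_\alpha B$ is any injective partial tensor-product functor and $x\in A\odot B$, then $\|x\|_\alpha$ equals the $\alpha$-norm of $x$ computed in $\mathcal I(A)\odot B$, which is at most the maximal norm there, that is $\|x\|_\alpha\le\|x\|_{i,\varepsilon}$; combined with the injectivity just shown, $-\otimes_{i,\varepsilon}B$ is the maximal injective partial tensor-product functor for $B$.

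For exactness of $A\otimes_{i,\varepsilon}-$ on a sequence $0\to J\to B\to B/J\to 0$ I would first assume $A$ unital; the general case is obtained by passing to the unitization, which is forced on us because Lemma~\ref{lem:exact} requires non-degeneracy. When $A$ is unital so is $\mathcal I(A)$, the inclusions $A\otimes_{i,\varepsilon}(-)\hookrightarrow\mathcal I(A)\otimes_{\max}(-)$ are non-degenerate, the bottom row of the evident ladder is exact because $\mathcal I(A)\otimes_{\max}-$ is exact, and $A\otimes_{i,\varepsilon}J\hookrightarrow A\otimes_{i,\varepsilon}B$ is an ideal inclusion because $A\odot J$ is an ideal in $A\odot B$; Lemma~\ref{lem:exact} then yields $\ker\big(A\otimes_{i,\varepsilon}B\to A\otimes_{i,\varepsilon}(B/J)\big)\subseteq A\otimes_{i,\varepsilon}J$, and the reverse containment and surjectivity are immediate. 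For arbitrary $A$, one runs this with the unitization $\tilde A$, uses injectivity in the first variable to view $A\otimes_{i,\varepsilon}(-)$ inside $\tilde A\otimes_{i,\varepsilon}(-)$, and observes that every element of $\ker\big(A\otimes_{i,\varepsilon}B\to A\otimes_{i,\varepsilon}(B/J)\big)=(A\otimes_{i,\varepsilon}B)\cap(\tilde A\otimes_{i,\varepsilon}J)$ is the limit of $(e_\lambda\otimes1)z$ for an approximate unit $(e_\lambda)$ of $A$, and that $(e_\lambda\otimes1)z\in\overline{A\odot J}=A\otimes_{i,\varepsilon}J$ since $e_\lambda\tilde a\in A$.

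Finally, minimality of $A\otimes_{i,\varepsilon}-$ among exact partial tensor-product functors for $A$ is the step I expect to carry the real content, and it is where outside input enters. Let $A\otimes_\beta-$ be exact and choose a surjection $\mathcal F\twoheadrightarrow B$ with $\mathcal F$ a free $C^*$-algebra having the local lifting property. As the injective envelope $\mathcal I(A)$ has the weak expectation property, $\mathcal I(A)\otimes_{\max}\mathcal F=\mathcal I(A)\otimes\mathcal F$, whence $A\otimes_{i,\varepsilon}\mathcal F=A\otimes\mathcal F\le A\otimes_\beta\mathcal F$. Since $A\otimes_{i,\varepsilon}-$ (by the previous step) and $A\otimes_\beta-$ are both exact, each sends $\mathcal F\twoheadrightarrow B$ to a quotient map, so for $x\in A\odot B$ the quotient-norm formula gives $\|x\|_{i,\varepsilon}=\inf\|y\|_{i,\varepsilon}\le\inf\|y\|_\beta=\|x\|_\beta$ over lifts $y\in A\odot\mathcal F$ of $x$; hence $A\otimes_{i,\varepsilon}-\le A\otimes_\beta-$. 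Uniqueness of a tensor-product functor that is injective in the first variable and exact in the second then follows at once, being dominated by $-\otimes_{i,\varepsilon}-$ by maximality and dominating it by minimality, and the cp-property in both variables was recorded in the first step. The principal obstacle is this last argument, whose non-formal ingredients are the LLP--WEP tensorial identity and the behaviour of exact tensor functors under quotients; the non-degeneracy requirement in the exactness proof is the secondary point needing care, resolved by the passage to the unitization.
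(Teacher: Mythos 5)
Your proposal is correct and follows essentially the same route as the paper: you define $A\otimes_{i,\varepsilon}B$ as the closure of $A\odot B$ in $\mathcal I\otimes_{\max}B$ for an injective $\mathcal I\supseteq A$ (the paper uses $\mathcal B(H)$ and Arveson's extension theorem), prove injectivity/maximality and exactness via Lemma~\ref{lem:exact} in the same way, and your minimality step via a free cover with the LLP and the WEP of the injective envelope rests on the same key input as the paper's use of the unique $C^*$-norm on $\mathcal B(H)\odot C^*(F_X)$. Your small deviations (an approximate-unit argument instead of the $3\times 3$-lemma for non-unital $A$, a quotient-norm comparison instead of the explicit diagram chase) are only cosmetic.
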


\begin{proof}
	Let $A$ and $B$ be $C^*$-algebras and let $\iota\colon A\hookrightarrow \mathcal B(H)$ be an embedding into the bounded operators on a Hilbert space. We define 
		\begin{equation}\label{eq:defntensor}
			A\otimes_{i,\varepsilon}B:=\iota\otimes \id_B(A\otimes_{\max}B)\subseteq \mathcal B(H)\otimes_{\max}B.
		\end{equation}
	To show that $-\otimes_{i,\varepsilon}-$ has the desired properties, we verify the following claims:
		\begin{claim}
			Up to canonical isomorphism, the definition of $A\otimes_{i,\varepsilon}B$ is independent of $\iota$.
		\end{claim}
		
			Let $\iota'\colon A\hookrightarrow \mathcal B(H')$ be another embedding. Then by Arveson's extension theorem there exist completely positive contractive maps $\Psi\colon\mathcal B(H)\to \mathcal B(H')$ extending $\iota'$ along $\iota$ and $\Phi\colon\mathcal B(H')\to \mathcal B(H)$ extending $\iota$ along $\iota'$. Then $\Psi\otimes_{\max}\id_B$ and $\Phi\otimes_{\max}\id_B$ restrict to mutually inverse $*$-isomorphisms
				\[\iota\otimes \id_B(A\otimes_{\max}B)\cong \iota'\otimes \id_B(A\otimes_{\max}B).\]
			
		\begin{claim}
			$A\otimes_{i,\varepsilon}B$ is functorial for completely positive maps in both variables. 
		\end{claim}
		
			Functoriality for completely positive maps in $B$ follows immediately from the definition. To see functoriality in $A$, let $\varphi\colon A_1\to A_2$ be a completely positive map and let $\iota_j\colon A_j\hookrightarrow \mathcal B(H_j), j=1,2$ be embeddings. Let $\Psi\colon\mathcal B(H_1)\to \mathcal B(H_2)$ be a completely positive map extending $\iota_2\circ \varphi$ along $\iota_1$. Then $\Psi\otimes_{\max}\id_B$ restricts to a completely positive map $A_1\otimes_{i,\varepsilon}B\to A_2\otimes_{i,\varepsilon}B$ extending the canonical map $\varphi\odot \id_B\colon A_1\odot B\to A_2\odot B$.
			
		\begin{claim}
			The functor $-\otimes_{i,\varepsilon}B$ is the maximal injective partial tensor-product functor for $B$.
		\end{claim}
		
			Let $\varphi\colon A_1\hookrightarrow A_2$ be an injective $*$-homomorphism and let $\iota\colon A_2\hookrightarrow \mathcal B(H)$ be an embedding. Then $\varphi\circ \iota\colon A_1\hookrightarrow \mathcal B(H)$ is an embedding too. Inserting this embedding into \eqref{eq:defntensor} shows that $\varphi\otimes \id_B\colon A_1\otimes_{i,\varepsilon}B\to A_2\otimes_{i,\varepsilon}B$ is isometric and therefore injective. 
			Now let $-\otimes_{\alpha}B$ be another injective partial tensor-product functor for $B$ and let $A\hookrightarrow \mathcal B(H)$ be an embedding. Then the canonical quotient map $\mathcal B(H)\otimes_{\max}B\to \mathcal B(H)\otimes_\alpha B$ restricts to a quotient map $A\otimes_{i,\varepsilon}B\to A\otimes_\alpha B$. Thus, $-\otimes_{i,\varepsilon}B$ is maximal.
		
		\begin{claim}
			The functor $A\otimes_{i,\varepsilon}-$ is exact. 
		\end{claim}
		
			Let $0\to I \xrightarrow{\iota} B\xrightarrow{\pi} Q\to 0$ be an exact sequence of $C^*$-algebras. Assume first that $A$ is unital and choose a unital embedding $A\hookrightarrow \mathcal B(H)$. Then the upper row of the diagram
			 	\[
			 		\begin{tikzcd}
			 			0\arrow[r]	&A\otimes_{i,\varepsilon}I\arrow[d,hook]\arrow[r,"\id_A\otimes \iota"]	&[2em]A\otimes_{i,\varepsilon}B\arrow[d,hook]\arrow[r,"\id_A\otimes \pi"]	&[2em]A\otimes_{i,\varepsilon}Q\arrow[d,hook]\arrow[r]	&0\\
			 			0\arrow[r]	&\mathcal B(H)\otimes_{\max}I\arrow[r,"\id_{\mathcal B(H)}\otimes \iota"]	&\mathcal B(H)\otimes_{\max}B\arrow[r,"\id_{\mathcal B(H)}\otimes \pi"]	&\mathcal B(H)\otimes_{\max}Q\arrow[r]	&0
			 		\end{tikzcd}
			 	\]
			 is exact by Lemma \ref{lem:exact}. Now assume that $A$ is not unital and denote by $\tilde A$ its unitization. By the above, the middle and lower row of the diagram 
			 \begin{equation}\label{eq:3x3Lemma}
			 	\begin{tikzcd}
			 		&0\arrow[d] &0\arrow[d] &0\arrow[d] &~\\
			 		0\arrow[r]		&A\otimes_{i,\varepsilon}I\arrow[d]\arrow[r]		&A\otimes_{i,\varepsilon}B\arrow[d]\arrow[r]		&A\otimes_{i,\varepsilon}Q\arrow[d]\arrow[r] 		&0 \\
			 		0\arrow[r]		&\tilde A\otimes_{i,\varepsilon}I\arrow[d]\arrow[r]		&\tilde A\otimes_{i,\varepsilon}B\arrow[d]\arrow[r]		&\tilde A\otimes_{i,\varepsilon}Q \arrow[d]\arrow[r]		&0 \\
			 		0\arrow[r]		&\C\otimes_{i,\varepsilon}I\arrow[d]\arrow[r]		&\C\otimes_{i,\varepsilon}B\arrow[d]\arrow[r]		&\C\otimes_{i,\varepsilon}Q\arrow[d]\arrow[r] 		&0 \\
			 		&0 &0 &0 &
			 	\end{tikzcd}
			 \end{equation}
			 are exact. Since the extension $0\to A\to\tilde A\to \C\to 0$ splits, the columns of \eqref{eq:3x3Lemma} are exact as well. Now exactness of the upper row of \eqref{eq:3x3Lemma} follows from the $3\times 3$-Lemma.
		
			\begin{claim}
				The functor $A\otimes_{i,\varepsilon}-$ is the minimal exact partial tensor-product functor. 
			\end{claim}
			
				Let $A\otimes_\alpha -$ be another exact partial tensor-product functor and fix a $C^*$-algebra $B$. Assume first that $B$ is unital and pick a surjective $*$-homomorphism $C^*(F_X)\to B$ where $F_X$ denotes the free group on a set $X$ of unitaries generating $B$. Denote by $I$ the kernel of $C^*(F_X)\to B$ and choose an embedding $\iota\colon A\hookrightarrow \mathcal B(H)$. By  \cite[Cor. 1.2]{MR1282196} (see also \cite{pisier96}), there is a unique $C^*$-norm on $\mathcal B(H)\odot C^*(F_X)$. In particular, we have a canonical $*$-homomorphism
					\[A\otimes_\alpha C^*(F_X)\to A\otimes C^*(F_X)\xrightarrow{\iota\otimes \id}\mathcal B(H)\otimes C^*(F_X)=\mathcal B(H)\otimes_{\max}C^*(F_X)\]
				mapping $A\otimes_\alpha I$ to $\mathcal B(H)\otimes_{\max}I$. By exactness of both $A\otimes_\alpha-$ and $\mathcal B(H)\otimes_{\max}-$, we can fill the following diagram with the dashed $*$-homomorphism $\psi$:
					\[
						\begin{tikzcd}
							0\arrow[r]	&A\otimes_\alpha I\arrow[d]\arrow[r]	&A\otimes_\alpha C^*(F_X)\arrow[r]\arrow[d]	&A\otimes_{\alpha}B\arrow[r]\arrow[d,dashed,"\psi"]	&0\\
							0\arrow[r]	&\mathcal B(H)\otimes_{\max} I \arrow[r]	&\mathcal B(H)\otimes_{\max}C^*(F_X)\arrow[r]	&\mathcal B(H)\otimes_{\max}B\arrow[r]	&0
						\end{tikzcd}
					\]
				By definition, we have $\psi(A\otimes_\alpha B)=A\otimes_{i,\varepsilon}B$. If $B$ is a non-unital $C^*$-algebra, we can apply the same argument to its unitization and use exactness to produce a canonical quotient map $A\otimes_{\alpha}B\to A\otimes_{i,\varepsilon}B$. This proves maximality.
%		\begin{claim}	
%			$-\otimes_{i,\varepsilon}-$ is the unique tensor-product functor which is injective in the first and exact in the second variable.
%		\end{claim}
%	
%	If $-\otimes_\beta-$ another functor with these properties, then we have $-\otimes_\beta \geq -\otimes_{i,\varepsilon}-$ by exactness and $-\otimes_{i,\varepsilon}-\geq -\otimes_\beta-$ by injectivity. Thus we have $-\otimes_\beta-=-\otimes_{i,\varepsilon}-$.
\end{proof}

\begin{rem}
	Let $F$ be a non-amenable free group and $H$ an infinite-dimensional Hilbert space. Then the flip isomorphism $\mathcal B(H)\odot C^*_r(F)\cong C^*_r(F)\odot \mathcal B(H)$ does \emph{not} extend to an isomorphism $\mathcal B(H)\otimes_{i,\varepsilon}C^*_r(F)\cong C^*_r(F)\otimes_{i,\varepsilon}\mathcal B(H)$. Therefore, Kirchberg's tensor-product functor $-\otimes_{i,\varepsilon}-$ is not symmetric. Indeed, we have $C^*_r(F)\otimes_{i,\varepsilon}\mathcal B(H)=C^*_r(F)\otimes \mathcal B(H)$ since $C^*_r(F)$ is exact and $\mathcal B(H)\otimes_{i,\varepsilon}C^*_r(F)=\mathcal B(H)\otimes_{\max}C^*_r(F)$ by construction. But the identity map on $\mathcal B(H)\odot C^*_r(F)$ does not extend to an isomorphism $\mathcal B(H)\otimes_{\max}C^*_r(F)\cong \mathcal B(H)\otimes C^*_r(F)$ since $C^*_r(F)$ does not have the local lifting property \cite[Cor. 3.7.12, Thm. 13.1.6, Cor. 13.2.5]{brownozawa}.
\end{rem}

\section{Application to crossed products}\label{sec:crossed product}
Throughout this section, let $G$ be a \emph{discrete} group. We recall a version of Fell's absorption principle from \cite{MR4273185}. 

\begin{prop}[{\cite[Prop. 2.8]{MR4273185}}]
	Let $-\rtimes_\mu G$ be a crossed-product functor with the cp-map property and let $A$ be a $C^*$-algebra equipped with the trivial $G$-action. Then the canonical map $A\odot C^*_\mu(G)\to A\rtimes_\mu G$ is injective. In particular, $A\mapsto A\otimes_\mu C^*_\mu(G):=A\rtimes_\mu G$ is a partial tensor-product functor for $C^*_\mu(G)$. 
\end{prop}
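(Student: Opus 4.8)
The plan is to produce, for every state $\omega$ of $A$, a completely positive ``slice'' $S_\omega\colon A\rtimes_\mu G\to C^*_\mu(G)$ with $S_\omega(a\delta_g)=\omega(a)\delta_g$, and then deduce injectivity of the canonical map by a linear-algebra argument. The conceptual point, which is really the only idea needed, is that \emph{the triviality of the $G$-action on $A$ puts all states of $A$ within the reach of the cp-map property}: regarding $\C$ as a $G$-$C^*$-algebra with its (necessarily trivial) action, a state $\omega\colon A\to\C$ is $G$-equivariant, since $\omega(g\cdot a)=\omega(a)=g\cdot\omega(a)$, and it is of course completely positive. Hence the cp-map property of $-\rtimes_\mu G$ yields a completely positive map $S_\omega:=\omega\rtimes_\mu G\colon A\rtimes_\mu G\to\C\rtimes_\mu G=C^*_\mu(G)$ which restricts on $A\rtimes_{\alg}G$ to $a\delta_g\mapsto\omega(a)\delta_g$.

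The argument then runs as follows. First I would reduce to the case that $A$ is unital: for the unitization $\iota\colon A\hookrightarrow\tilde A$, the square formed by the canonical maps $\varphi_A$ and $\varphi_{\tilde A}$, the functorial homomorphism $A\rtimes_\mu G\to\tilde A\rtimes_\mu G$, and $\iota\odot\id_{C^*_\mu(G)}$ commutes; since $\iota\odot\id$ is injective and -- by the unital case -- so is $\varphi_{\tilde A}$, injectivity of $\varphi_A$ follows. So assume $A$ unital, let $\kappa\colon C^*_\mu(G)=\C\rtimes_\mu G\to A\rtimes_\mu G$ be the $*$-homomorphism induced by $\C\to A$, $1\mapsto 1_A$, so that $\varphi_A(a\otimes c)=(a\delta_e)\,\kappa(c)$. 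I claim $S_\omega\circ\varphi_A=\omega\odot\id_{C^*_\mu(G)}$ as linear maps on $A\odot C^*_\mu(G)$: both are linear, and on an elementary tensor $a\otimes c$ both equal $\omega(a)c$ -- for $S_\omega\circ\varphi_A$ this holds for $c=\delta_g$ by the definition of $S_\omega$, and for general $c\in C^*_\mu(G)$ by approximating $c$ in norm by elements of $\C[G]$ and using norm-continuity of $S_\omega$ and $\kappa$.

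Granting the claim, injectivity is immediate. Let $x=\sum_{i=1}^{n}a_i\otimes c_i\in\ker\varphi_A$; after collecting terms we may assume $a_1,\dots,a_n$ linearly independent. Then $\sum_i\omega(a_i)c_i=(S_\omega\circ\varphi_A)(x)=0$ for every state $\omega$, hence for every $\omega\in A^*$ since states span $A^*$; choosing (Hahn--Banach) functionals $\omega_j$ with $\omega_j(a_i)=\delta_{ij}$ gives $c_j=0$ for all $j$, so $x=0$. This proves the canonical map is injective; since it plainly has dense image and is natural in $A$ (both following from the corresponding properties of $-\rtimes_\mu G$ by inspection on elementary tensors), the ``in particular'' clause follows, with $A\otimes_\mu C^*_\mu(G):=A\rtimes_\mu G$ the resulting partial tensor-product functor. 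I do not expect a serious obstacle here: the only slightly delicate points are the unitization reduction and the verification of the elementary-tensor formula for $S_\omega\circ\varphi_A$, both routine; the substance lies entirely in the first paragraph's observation, which is the manifestation of Fell's absorption principle in this setting -- compare the reduced case, where it specializes to the familiar isomorphism $A\rtimes_r G\cong A\otimes C^*_r(G)$.
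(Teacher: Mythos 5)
Your argument is correct. Note that the paper itself gives no proof of this proposition — it is imported verbatim from \cite{MR4273185} — so there is no in-text argument to compare against; your route is the natural (and presumably the intended) one: since the action on $A$ is trivial, every state $\omega$ of $A$ is automatically $G$-equivariant, the cp-map property produces the slice $S_\omega=\omega\rtimes_\mu G\colon A\rtimes_\mu G\to C^*_\mu(G)$, and the identity $S_\omega\circ\varphi_A=\omega\odot\id$ (checked on $a\otimes\delta_g$ and extended by continuity) together with linear independence and Hahn--Banach kills any element of the kernel. The unitization reduction is also fine, since injectivity of $\varphi_{\tilde A}\circ(\iota\odot\id)$ forces injectivity of $\varphi_A$ without needing $A\rtimes_\mu G\to\tilde A\rtimes_\mu G$ to be injective. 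The only point you leave implicit is the existence of the canonical map itself, i.e.\ of the $*$-homomorphism $C^*_\mu(G)\to M(A\rtimes_\mu G)$ (your $\kappa$ in the unital case, obtained from functoriality applied to $\C\to A$); in the non-unital case this needs a word, but it is presupposed by the statement of the proposition, so taking it as given is legitimate.
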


Although only stated for $\rho=\max$ in \cite{MR4273185}, the proof of the following lemma works verbatim for every crossed-product functor $-\rtimes_\rho G$:
\begin{lem}[{\cite[Lem 2.10]{MR4273185}}]\label{prop:Fell}
	Let $-\rtimes_\mu G$ be a crossed-product functor with the cp-map property and let $-\rtimes_\rho G$ be any crossed-product functor. Then for every $G$-$C^*$-algebra $A$, there is an injective $*$-homomorphism
		\[A\rtimes_\mu G\hookrightarrow (A\rtimes_\rho G)\otimes_\mu C^*_\mu(G)\]
	given by $a\delta_g\mapsto a\delta_g\otimes \delta_g$ for $a\in A,g\in G$. 
\end{lem}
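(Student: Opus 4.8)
The plan is to construct the map not by extending something by hand off the algebraic crossed product, but as a composition of a functorially induced $*$-homomorphism with an exterior-equivalence isomorphism, and then to verify injectivity by exhibiting a completely positive left inverse. Write $\alpha$ for the $G$-action on $A$ and let $r_g\in M(A\rtimes_\rho G)$ be the canonical unitaries implementing $\alpha$. Equip $B:=A\rtimes_\rho G$ with the inner action $\beta_g:=\Ad(r_g)$; since $G$ is discrete this is a genuine $G$-$C^*$-algebra, and the inclusion $j\colon(A,\alpha)\to(B,\beta)$, $a\mapsto a\delta_e$, is an injective $G$-equivariant $*$-homomorphism, because the relation $r_g a r_g^*=\alpha_g(a)$ holding in $M(B)$ is exactly equivariance of $j$. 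Applying the functor $-\rtimes_\mu G$ gives a $*$-homomorphism $j\rtimes_\mu G\colon A\rtimes_\mu G\to(B,\beta)\rtimes_\mu G$ extending $a\delta_g\mapsto j(a)\,\delta_g^\beta$.

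Next, the homomorphism $g\mapsto r_g$ is a cocycle exhibiting $\beta$ as exterior equivalent to the trivial action on $B$. Since crossed-product functors are invariant under exterior equivalence, the $*$-algebra isomorphism $b\,\delta_g^\beta\mapsto(b r_g)\,\delta_g$ of the algebraic crossed products extends to a $C^*$-isomorphism $(B,\beta)\rtimes_\mu G\xrightarrow{\ \cong\ }(B,\mathrm{triv})\rtimes_\mu G=B\otimes_\mu C^*_\mu(G)$, where the last identification is the definition of $\otimes_\mu C^*_\mu(G)$ supplied by the previous Proposition. Composing this with $j\rtimes_\mu G$ and using $j(a)r_g=a\delta_g$ inside $A\rtimes_\rho G$, one reads off that the resulting $*$-homomorphism $A\rtimes_\mu G\to(A\rtimes_\rho G)\otimes_\mu C^*_\mu(G)$ sends $a\delta_g$ to $a\delta_g\otimes\delta_g$, as required. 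It therefore remains only to show that $j\rtimes_\mu G$ is injective.

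For injectivity I would build a completely positive left inverse. Composing the canonical quotient $A\rtimes_\rho G\twoheadrightarrow A\rtimes_r G$ (available since $\rho$ dominates $r$) with the canonical conditional expectation $A\rtimes_r G\to A$ yields a contractive completely positive map $E\colon A\rtimes_\rho G\to A$ with $E(a\delta_g)=\delta_{g,e}\,a$. A direct computation on the elements $a\delta_g$ shows $E$ is $G$-equivariant from $(B,\beta)$ to $(A,\alpha)$, and clearly $E\circ j=\id_A$. Because $-\rtimes_\mu G$ has the cp-map property, $E$ induces a completely positive map $E\rtimes_\mu G\colon(B,\beta)\rtimes_\mu G\to A\rtimes_\mu G$, and $(E\rtimes_\mu G)\circ(j\rtimes_\mu G)$ restricts to the identity on the dense $*$-subalgebra $A\rtimes_{\alg}G$, hence equals $\id_{A\rtimes_\mu G}$. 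Thus $j\rtimes_\mu G$ is isometric, in particular injective, and composing with the exterior-equivalence isomorphism completes the proof.

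The step I expect to be the main obstacle is this last one. Since $-\rtimes_\mu G$ is only assumed to have the cp-map property and is not assumed to be an injective functor, injectivity of $j$ does not transfer to $j\rtimes_\mu G$ for free; the key idea is to route through the reduced crossed product, where a conditional expectation onto $A$ is available, and to feed this completely positive splitting into the cp-map property. By comparison, the other ingredients — that $j$ is equivariant for the inner action $\beta$, and that crossed-product functors respect exterior equivalence — are standard, with the only delicate point being the bookkeeping with the implementing unitaries $r_g$ needed to land precisely on the formula $a\delta_g\mapsto a\delta_g\otimes\delta_g$.
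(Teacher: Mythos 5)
Your construction is evaluated on its own merits here, since the paper does not prove this lemma but simply cites \cite{MR4273185} and remarks that the proof there works for arbitrary $\rho$. Most of your argument is correct and well organized: $j\colon a\mapsto a\delta_e$ is indeed equivariant for $\beta=\Ad(r)$; the map $E\colon A\rtimes_\rho G\to A$ obtained from the quotient onto $A\rtimes_r G$ followed by the canonical conditional expectation is a $\beta$--$\alpha$-equivariant ccp left inverse of $j$; feeding it into the cp-map property shows $(E\rtimes_\mu G)\circ(j\rtimes_\mu G)$ is a bounded map equal to the identity on the dense subalgebra $A\rtimes_{\alg}G$, hence $j\rtimes_\mu G$ is injective (note that the cp-map property as defined only gives a completely positive, not a priori contractive, extension $E\rtimes_\mu G$, so claim injectivity rather than isometry); and the bookkeeping $j(a)r_g=a\delta_g$ does produce the formula $a\delta_g\mapsto a\delta_g\otimes\delta_g$ after untwisting.

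The genuine gap is the sentence ``since crossed-product functors are invariant under exterior equivalence.'' For an arbitrary crossed-product functor this is not part of the definition and is not a known general fact: the untwisting map $b\,\delta_g^\beta\mapsto br_g\,\delta_g$ is not induced by any equivariant $*$-homomorphism (or cp map) of coefficient algebras, so nothing forces it to be bounded for an exotic $\mu$-norm; exterior-equivalence invariance is a special property on a par with Morita compatibility, which general exotic completions need not enjoy. What saves your argument is that you only apply the claim to $-\rtimes_\mu G$, which is assumed to have the cp-map property: for such (correspondence) functors, invariance under exterior equivalence \emph{is} a theorem of Buss--Echterhoff--Willett (obtained, e.g., from Morita compatibility together with the Packer--Raeburn stabilization trick, or from functoriality on the equivariant correspondence category of \cite{buss18}), and the resulting isomorphism $(B,\beta)\rtimes_\mu G\cong(B,\mathrm{triv})\rtimes_\mu G=B\otimes_\mu C^*_\mu(G)$ does extend the canonical algebraic untwisting, which is exactly what you need. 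So this step must be restricted to $\mu$ and either cited or proved; as written it is unjustified, and it is precisely where the analytic content of the lemma sits, the rest being algebraic bookkeeping plus the expectation trick.
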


\begin{thm}\label{thm:embedding1}
	For every $G$-$C^*$-algebra $A$, there are injective $*$-homomorphisms 
		\begin{enumerate}
			\item $A\rtimes_{\inj}G\hookrightarrow (A\rtimes_r G)\otimes_{i,\varepsilon}C^*(G),\quad a\delta_g\mapsto a\delta_g\otimes \delta_g$.
			\item $A\rtimes_{\corr}G\hookrightarrow C^*_r(G)\otimes_{i,\varepsilon}(A\rtimes G),\quad a\delta_g \mapsto \delta_g\otimes a\delta_g$.
		\end{enumerate}
\end{thm}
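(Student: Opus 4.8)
The plan is to deduce both statements from Lemma \ref{prop:Fell} by choosing the two crossed-product functors appropriately, and then to identify the partial tensor-product functors $-\otimes_\mu C^*_\mu(G)$ that arise with the Kirchberg tensor product $-\otimes_{i,\varepsilon}-$, using the universal properties in Theorem \ref{thm:kirchberg}.

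For (1), apply Lemma \ref{prop:Fell} with $-\rtimes_\mu G = -\rtimes_{\inj}G$ (which has the cp-map property, since every injective crossed-product functor does, by \cite[Thm.\,4.9]{buss18}) and $-\rtimes_\rho G = -\rtimes_r G$. This yields an injective $*$-homomorphism $A\rtimes_{\inj}G\hookrightarrow (A\rtimes_r G)\otimes_{\inj} C^*_{\inj}(G)$ sending $a\delta_g\mapsto a\delta_g\otimes\delta_g$, where $-\otimes_{\inj}C^*_{\inj}(G)$ denotes the partial tensor-product functor $B\mapsto B\rtimes_{\inj}G$ for the trivial action. It therefore suffices to show that this partial tensor-product functor is injective and that $C^*_{\inj}(G)=C^*(G)$; then by the maximality half of Theorem \ref{thm:kirchberg}(2) there is a canonical quotient map $(A\rtimes_r G)\otimes_{i,\varepsilon}C^*(G)\to (A\rtimes_r G)\otimes_{\inj}C^*(G)$, but since the composite $A\rtimes_{\inj}G\to (A\rtimes_r G)\otimes_{\inj}C^*(G)$ is injective we need the map to go the other way—so the argument must instead be that $-\otimes_{\inj}C^*_{\inj}(G)$, being injective, is \emph{dominated} by $-\otimes_{i,\varepsilon}C^*(G)$, hence the injective embedding into the $\otimes_{\inj}$-completion factors through... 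Let me restructure: the cleaner route is that injectivity of the partial tensor functor together with the maximality of $-\otimes_{i,\varepsilon}C^*(G)$ among injective partial tensor functors gives a quotient map $(A\rtimes_r G)\otimes_{i,\varepsilon}C^*(G)\twoheadrightarrow(A\rtimes_r G)\rtimes_{\inj}G$, and one checks the composite $A\rtimes_{\inj}G\hookrightarrow (A\rtimes_r G)\rtimes_{\inj}G$ of Lemma \ref{prop:Fell} lifts (on the dense $*$-subalgebra $A[G]$, trivially) to the claimed map into $(A\rtimes_r G)\otimes_{i,\varepsilon}C^*(G)$, and that this lift is injective because it dominates an injective map. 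So the two facts to establish are: (a) $B\mapsto B\rtimes_{\inj}G$ is an injective partial tensor-product functor for $C^*(G)$; and (b) $C^*_{\inj}(G)=C^*(G)$.

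For (2), the symmetric move is to apply Lemma \ref{prop:Fell} with $-\rtimes_\mu G=-\rtimes_{\corr}G$ (which has the cp-map property by construction, \cite[Cor.\,8.8]{buss18}) and $-\rtimes_\rho G=-\rtimes G$ the maximal crossed product, obtaining $A\rtimes_{\corr}G\hookrightarrow (A\rtimes G)\otimes_{\corr}C^*_{\corr}(G)$, $a\delta_g\mapsto a\delta_g\otimes\delta_g$. Here I want to identify $B\mapsto B\rtimes_{\corr}G$ with $-\otimes_{i,\varepsilon}$ in the \emph{left} variable, i.e. as the minimal exact partial tensor-product functor for $C^*_{\corr}(G)$, invoking Theorem \ref{thm:kirchberg}(1). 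The facts needed are: (a$'$) $B\mapsto B\rtimes_{\corr}G$ is an exact partial tensor-product functor for $C^*_{\corr}(G)$ (exactness of $-\rtimes_{\corr}G$ is part of its definition), and (b$'$) $C^*_{\corr}(G)=C^*(G)$. Combining (a$'$), (b$'$) with the minimality in Theorem \ref{thm:kirchberg}(1) gives a canonical quotient $(A\rtimes G)\otimes_{\corr}C^*(G)\twoheadrightarrow$... again I want the embedding direction: minimality says $-\otimes_{i,\varepsilon}(A\rtimes G)$ is dominated by every exact partial tensor functor for $A\rtimes G$ in the first variable, so there is a quotient map $(A\rtimes G)$-something, but the flip variable is the issue. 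The correct identification is: $C^*_r(G)\otimes_{i,\varepsilon}(A\rtimes G)$, as a functor of the \emph{second} variable for fixed $C^*_r(G)$, is exact (Theorem \ref{thm:kirchberg}(1)), and the map $a\delta_g\mapsto\delta_g\otimes a\delta_g$ should be compared with the composite from Lemma \ref{prop:Fell} after a flip. Since $-\otimes_{i,\varepsilon}-$ is not symmetric, this flip requires care—one uses instead that $C^*_r(G)\otimes_{i,\varepsilon}-$ is the minimal exact partial tensor functor for $C^*_r(G)$, $C^*_r(G)=C^*_r(G)$ trivially, and that $B\mapsto$ (the relevant completion) coincides with $B\mapsto C^*_r(G)\rtimes_{?}$... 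The honest path is: realize $A\rtimes_{\corr}G\hookrightarrow C^*_r(G)\otimes_{i,\varepsilon}(A\rtimes G)$ by factoring the Lemma \ref{prop:Fell} embedding (with $\mu=\corr$, $\rho=\max$) through the flip and through the quotient map coming from minimality of $C^*_r(G)\otimes_{i,\varepsilon}-$, checking compatibility on $A[G]$ where all maps are the obvious algebraic ones.

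The main obstacle will be (b)/(b$'$): proving $C^*_{\inj}(G)=C^*_{\corr}(G)=C^*(G)$, i.e. that for the trivial one-dimensional coefficient algebra the maximal injective and the minimal exact correspondence completions of $\C[G]$ both collapse to the full group $C^*$-algebra. This is exactly where Theorem \ref{thm:kirchberg} does the work: $C^*_\mu(G)=\C\rtimes_\mu G=\C\otimes_\mu C^*_\mu(G)$, and one must show the completion norm on $\C\odot C^*(G)=C^*(G)$ forced by injectivity (resp. by exactness as a functor in the $\C$-slot, using that $\C\otimes_{\max}-=\C\otimes-$ trivially and any exact functor squeezed between them is forced) is the maximal one—the subtle point being that a priori $-\rtimes_{\inj}G$ and $-\rtimes_{\corr}G$ differ from $-\rtimes G$, so one genuinely needs the rigidity built into Kirchberg's functor (via the free-group / unique-$C^*$-norm argument in the proof of Theorem \ref{thm:kirchberg}) to pin the norm down. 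A secondary technical point is the asymmetry of $-\otimes_{i,\varepsilon}-$ noted in the remark: one must be scrupulous about which variable is which, since $(A\rtimes_r G)\otimes_{i,\varepsilon}C^*(G)$ has $A\rtimes_r G$ in the injective slot while $C^*_r(G)\otimes_{i,\varepsilon}(A\rtimes G)$ has $A\rtimes G$ in the exact slot, and these play complementary roles matching $-\rtimes_{\inj}$ versus $-\rtimes_{\corr}$ respectively.
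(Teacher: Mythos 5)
Your reduction to Lemma \ref{prop:Fell} plus the universal properties of $-\otimes_{i,\varepsilon}-$ is the right starting point (and is how the paper begins), but the proposal breaks down at exactly the points you flag as "the main obstacle." The facts (b) and (b$'$), namely $C^*_{\inj}(G)=C^*(G)$ and $C^*_{\corr}(G)=C^*(G)$, are simply false in general: for any exact non-amenable group (e.g.\ a free group) both $-\rtimes_{\inj}G$ and $-\rtimes_{\corr}G$ coincide with the reduced crossed product, so $C^*_{\inj}(G)=C^*_{\corr}(G)=C^*_r(G)\neq C^*(G)$. No rigidity of Kirchberg's functor can rescue this. The paper never needs such identifications; instead it compares partial tensor-product functors along the canonical quotients, using functoriality of $\otimes_{i,\varepsilon}$ in each variable together with its extremal properties: for (1) the chain $-\otimes_{i,\varepsilon}C^*(G)\geq -\otimes_{i,\varepsilon}C^*_{\inj}(G)\geq -\otimes_{\inj}C^*_{\inj}(G)$ (the last functor being $B\mapsto B\rtimes_{\inj}G$ for the trivial action), and for (2) the chain $C^*_r(G)\otimes_{i,\varepsilon}-\leq C^*_{\corr}(G)\otimes_{i,\varepsilon}-\leq C^*_{\corr}(G)\otimes_{\corr}-$. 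These chains, composed with the Fell-type embeddings, give one of the two inequalities in each case without ever identifying $C^*_\mu(G)$ with $C^*(G)$.

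The second gap is that you never establish the other inequality in either case, and your "lifts (on the dense $*$-subalgebra $A[G]$, trivially)" hides precisely the norm estimate that has to be proved. Concretely: in (1), to have a $*$-homomorphism $A\rtimes_{\inj}G\to (A\rtimes_r G)\otimes_{i,\varepsilon}C^*(G)$ at all, you must show that the norm induced on $A[G]$ by the image in $(A\rtimes_r G)\otimes_{i,\varepsilon}C^*(G)$ is dominated by the $\inj$-norm; the paper does this by observing that this image defines an injective crossed-product functor (injectivity of $-\rtimes_r G$ plus injectivity of $-\otimes_{i,\varepsilon}C^*(G)$ in the first variable) and invoking maximality of $-\rtimes_{\inj}G$ among injective functors. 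Dually, in (2) the injectivity of the claimed map $A\rtimes_{\corr}G\to C^*_r(G)\otimes_{i,\varepsilon}(A\rtimes G)$ requires showing that the image functor is an \emph{exact} crossed-product functor with the cp-map property (exactness of $-\rtimes G$, exactness of $C^*_r(G)\otimes_{i,\varepsilon}-$ in the second variable, and Lemma \ref{lem:exact} to control the kernel), whence it dominates $-\rtimes_{\corr}G$ by minimality of the latter; your minimality argument only yields the well-definedness direction, not injectivity. Without these two "image functor" arguments and with (b), (b$'$) false, the proof as proposed does not go through.
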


\begin{proof}
	We first prove the statement for $-\rtimes_{\inj}G$. Denote by $A\rtimes_\alpha G$ the image of $A\rtimes G$ in $(A\rtimes_r G)\otimes_{i,\varepsilon}C^*(G)$ under the map $a\delta_g\mapsto a\delta_g\otimes \delta_g$. Then $-\rtimes_\alpha G$ is an injective crossed-product functor and therefore $-\rtimes_\alpha G\leq -\rtimes_{\inj}G$. On the other hand, Lemma \ref{prop:Fell} gives us an embedding 
			\[A\rtimes_{\inj}G\hookrightarrow (A\rtimes_r G)\otimes_{\inj}C^*_{\inj}(G),\quad a\delta_g \mapsto a\delta_g\otimes \delta_g.\]
		Since $-\otimes_{i,\varepsilon}C^*_{\inj}(G)$ is the maximal injective partial tensor-product functor for $C^*_{\inj}(G)$, we have 
			\[-\otimes_{i,\varepsilon}C^*(G)\geq-\otimes_{i,\varepsilon}C^*_{\inj}(G)\geq -\otimes_{\inj}C^*_{\inj}(G)\]
		and therefore $-\rtimes_\alpha G\geq -\rtimes_{\inj}G$. 
		
		We now prove the statement for $-\rtimes_{\corr}G$. Denote by $A\rtimes_\beta G$ the image of $A\rtimes G$ in $C^*_r(G)\otimes_{i,\varepsilon}(A\rtimes G)$ under the map $a\delta_g\mapsto \delta_g\otimes a\delta_g$. Then $-\rtimes_\beta G$ is an exact crossed-product functor with the cp-map property by Lemma \ref{lem:exact} and therefore $-\rtimes_\beta G\geq -\rtimes_{\corr}G$. On the other hand, Lemma \ref{prop:Fell} gives us an embedding 
			\[A\rtimes_{\corr}G\hookrightarrow C^*_{\corr}(G)\otimes_{\corr}(A\rtimes G).\]
		Since $C^*_{\corr}(G)\otimes_{i,\varepsilon}-$ is the minimal exact partial tensor-product functor for $C^*_{\corr}(G)$, we get 
			\[C^*_r(G)\otimes_{i,\varepsilon}-\leq C^*_{\corr}(G)\otimes_{i,\varepsilon}-\leq C^*_{\corr}(G)\otimes_{\corr}-\]
		and therefore $-\rtimes_\beta G\leq -\rtimes_{\corr}G$.
\end{proof}
\begin{rem}
	The statement of the above theorem remains true if we replace $A\rtimes G$ by $A\rtimes_{\corr}G$, $C^*(G)$ by $C^*_{\corr}(G)$, $A\rtimes_r G$ by $A\rtimes_{\inj}G$, or $C^*_r(G)$ by $C^*_{\inj}(G)$. Indeed, the only properties of the maximal (resp. reduced) crossed product that we used in the proof were exactness (resp. injectivity) and the cp-map property.
\end{rem}

\begin{prop}\label{thm:embedding2}
	Let $I$ be a $G$-injective $G$-$C^*$-algebra and let $A$ be any $G$-$C^*$-algebra. Then the canonical embedding $A\hookrightarrow A\otimes_{\max} I, a\mapsto a\otimes 1$ induces an embedding
		\[A\rtimes_{\corr}G\hookrightarrow (A\otimes_{\max} I)\rtimes G.\]
\end{prop}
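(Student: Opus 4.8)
The plan is to exhibit $A\rtimes_{\corr}G$ as a subquotient of $(A\otimes_{\max}I)\rtimes G$ by producing, in both directions, completely positive maps that are compatible with the algebraic crossed products, and then invoking the universal property of $-\rtimes_{\corr}G$ as the minimal exact correspondence crossed-product functor. First I would observe that the inclusion $\kappa\colon A\hookrightarrow A\otimes_{\max}I$, $a\mapsto a\otimes 1$, is $G$-equivariant and unital-into, so that it induces a $*$-homomorphism $\kappa\rtimes_{\corr}G\colon A\rtimes_{\corr}G\to (A\otimes_{\max}I)\rtimes_{\corr}G$; composing with the canonical quotient $(A\otimes_{\max}I)\rtimes_{\corr}G\twoheadrightarrow (A\otimes_{\max}I)\rtimes G$ is the wrong direction, so instead I want the map $A\rtimes_{\corr}G\to (A\otimes_{\max}I)\rtimes G$ to come from a different route: since $-\rtimes_{\corr}G$ is the \emph{minimal} exact correspondence crossed product, it suffices to show that the $C^*$-completion of $A\rtimes_{\alg}G$ obtained by pulling back the norm along $\kappa\rtimes_{\alg}G\colon A\rtimes_{\alg}G\to (A\otimes_{\max}I)\rtimes G$ defines an exact crossed-product functor with the cp-map property, hence dominates $-\rtimes_{\corr}G$. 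Actually the cleanest formulation: let $A\rtimes_\gamma G$ denote the image of $A\rtimes G$ in $(A\otimes_{\max}I)\rtimes G$ under $a\delta_g\mapsto (a\otimes 1)\delta_g$; I must check $-\rtimes_\gamma G$ is a correspondence crossed-product functor that is exact, which by minimality forces $A\rtimes_\gamma G$ to be a quotient of $A\rtimes_{\corr}G$, giving one inequality of norms.

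For the reverse inequality — that the norm on $A\rtimes_{\alg}G$ coming from $(A\otimes_{\max}I)\rtimes G$ is dominated by the $\corr$-norm — I would use $G$-injectivity of $I$ to build a conditional-expectation-type map back. Fix any state $\phi$ on $I$ (for $I=\ell^\infty(G)$ one may take evaluation at the identity, or an invariant mean is not needed); more robustly, use that $I$ is $G$-injective: the unital $G$-equivariant ccp map $\C\to I$ extends along $\C\hookrightarrow I$ only trivially, so instead apply $G$-injectivity to the inclusion $A\hookrightarrow A\otimes_{\max}I$ to extend a suitable map. Concretely, I expect one wants a $G$-equivariant ccp map $E\colon A\otimes_{\max}I\to A$ with $E\circ\kappa=\id_A$; such an $E$ exists because $G$-injectivity of $I$ gives a $G$-equivariant ccp extension of $\id_I\colon I\to I$ along $I\hookrightarrow A\otimes_{\max}I$ (viewing $I\subseteq M(A\otimes_{\max}I)$ is not quite right, so more carefully: extend the map $a\otimes x\mapsto \phi(x)a$ is not automatically ccp, but the slice map $\mathrm{id}_A\otimes\phi$ \emph{is} ccp on $A\otimes_{\max}I$ for any state $\phi$ on $I$ by $\cite[\text{Thm.\,3.5.3}]{brownozawa}$). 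Taking $E=\mathrm{id}_A\otimes\phi$ for a $G$-invariant state $\phi$ on $I$ would do it, but such a state need not exist for non-amenable $G$ — this is exactly where $G$-injectivity must be used in place of invariance: $G$-injectivity of $I$ produces a $G$-equivariant ccp left inverse to $\kappa$ directly (extend $\id_I$ along the $G$-equivariant inclusion $I\hookrightarrow (A\otimes_{\max}I)$, $x\mapsto 1\otimes x$, to a $G$-equivariant ccp map $\theta\colon A\otimes_{\max}I\to I$, then... ). The correct and standard move: $A\otimes_{\max}I$ is a quotient-corner of $A\otimes_{\max}I$ and $G$-injectivity of $I$ yields a $G$-equivariant ccp map $A\otimes_{\max}I\to A$ splitting $\kappa$ because $A$ is a module over the $G$-injective $I$; I would cite or reprove that slice maps against the "co-unit" exist precisely under $G$-injectivity. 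Granting $E$, functoriality of $-\rtimes_{\corr}G$ for $G$-equivariant ccp maps (it has the cp-map property) gives $E\rtimes_{\corr}G\colon (A\otimes_{\max}I)\rtimes_{\corr}G\to A\rtimes_{\corr}G$; precomposing with the inclusion $(A\otimes_{\max}I)\rtimes_{\corr}G\hookrightarrow\cdots$ is again the wrong direction — rather, since $(A\otimes_{\max}I)\rtimes G\geq (A\otimes_{\max}I)\rtimes_{\corr}G$, one gets a ccp map $(A\otimes_{\max}I)\rtimes G\to A\rtimes_{\corr}G$ extending $E\rtimes_{\alg}G$, and this ccp map restricted to the copy of $A\rtimes_{\alg}G$ inside it is the inverse of $\kappa\rtimes_{\alg}G$ on the algebraic level, forcing the $\gamma$-norm to dominate the $\corr$-norm. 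Combining the two inequalities gives $A\rtimes_\gamma G=A\rtimes_{\corr}G$, i.e. $\kappa$ induces the asserted injection.

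The main obstacle is the construction of the $G$-equivariant completely positive splitting $E\colon A\otimes_{\max}I\to A$ of $\kappa$: this is where $G$-injectivity of $I$ is genuinely needed and where the hypothesis "$I$ unital" enters (recall $G$-injective $G$-$C^*$-algebras are automatically unital, as noted on p.\pageref{defn:injective}). The subtlety is that for non-exact groups there is in general no $G$-invariant state on $I$, so one cannot simply slice; instead the defining extension property of $I$ must be applied to the inclusion $A\hookrightarrow A\otimes_{\max}I$ together with some canonical $G$-equivariant ccp map into $I$ (such as one built from a — possibly non-invariant — state on $I$ composed with an $A$-valued slice), and one then checks that the resulting extension, post-composed appropriately, lands back in $A$ and splits $\kappa$. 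Once $E$ is in hand, everything else is a formal consequence of the universal property of $-\rtimes_{\corr}G$ (minimality among exact correspondence functors) and the cp-map property, exactly as in the proof of Theorem~\ref{thm:embedding1}; exactness of $-\rtimes_\gamma G$ follows from Lemma~\ref{lem:exact} applied to the ambient $(A\otimes_{\max}I)\rtimes G$-picture, since $-\rtimes G$ is exact and $\otimes_{\max}$ is exact. I would finish by remarking that for $I=\ell^\infty(G)$ this is the positive answer to \cite[Question 9.4]{buss18}.
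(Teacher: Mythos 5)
Your first inequality (defining $A\rtimes_\gamma G$ as the image of $A\rtimes G$ in $(A\otimes_{\max}I)\rtimes G$, checking it is an exact crossed-product functor with the cp-map property, and invoking minimality to get $\|\cdot\|_\gamma\geq\|\cdot\|_{\corr}$) is exactly the paper's first step. The gap is in the reverse inequality, and it is twofold. First, the $G$-equivariant ccp splitting $E\colon A\otimes_{\max}I\to A$ with $E(a\otimes 1)=a$ on which your argument hinges does not exist in general: taking $A=\C$ and $I=\ell^\infty(G)$, such an $E$ would be a $G$-invariant state on $\ell^\infty(G)$, i.e.\ an invariant mean, which fails to exist precisely for non-amenable $G$ --- the only case of interest here. $G$-injectivity only provides equivariant ccp maps \emph{into} $I$, never back onto $A$, which is why your attempts to extract $E$ from the extension property keep stalling. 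Second, even if $E$ existed, the map you build from it goes $(A\otimes_{\max}I)\rtimes G\to A\rtimes_{\corr}G$, and sending $x\mapsto$ (image of $\kappa\rtimes_{\alg}G(x)$) through it only yields $\|x\|_{\corr}\leq\|x\|_\gamma$ --- the inequality you already have from minimality; note that your own text first announces ``$\gamma$-norm dominated by the $\corr$-norm'' and then concludes ``$\gamma$-norm dominates the $\corr$-norm,'' so the two halves of your argument prove the same inequality twice and the embedding never follows.

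What is actually needed is a contractive (ccp) map \emph{out of} $A\rtimes_{\corr}G$ into $(A\otimes_{\max}I)\rtimes G$ sending $a\delta_g\mapsto(a\otimes 1)\delta_g$. The paper produces it by combining three ingredients you did not use in this role: the embedding $A\rtimes_{\corr}G\hookrightarrow C^*_r(G)\otimes_{i,\varepsilon}(A\rtimes G)$ of Theorem \ref{thm:embedding1}(2), which by the very definition of $\otimes_{i,\varepsilon}$ lands inside $\mathcal B(\ell^2(G))\otimes_{\max}(A\rtimes G)$; the untwisting isomorphism $\mathcal B(\ell^2(G))\otimes_{\max}(A\rtimes G)\cong(\mathcal B(\ell^2(G))\otimes_{\max}A)\rtimes G$, $T\otimes a\delta_g\mapsto T\lambda_{g^{-1}}\otimes a\delta_g$; and a $G$-equivariant ucp map $\varphi\colon\mathcal B(\ell^2(G))\to I$ obtained from $G$-injectivity by extending the unital inclusion $\C\to I$ along $\C\subseteq\mathcal B(\ell^2(G))$ (with the $\Ad\lambda$-action). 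Applying $(\varphi\otimes\id_A)\rtimes G$ then inserts the unit of $I$ and gives a ccp map $A\rtimes_{\corr}G\to(I\otimes_{\max}A)\rtimes G$ restricting to $a\delta_g\mapsto(a\otimes 1)\delta_g$, which is the inequality $\|\cdot\|_\gamma\leq\|\cdot\|_{\corr}$ you are missing. So the correct use of $G$-injectivity is to map $\mathcal B(\ell^2(G))$ equivariantly into $I$, not to split the inclusion $A\hookrightarrow A\otimes_{\max}I$.
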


\begin{proof}
	Denote by $A\rtimes_\alpha G$ the image of $A\rtimes G$ in $(A\otimes_{\max}I)\rtimes G$ under the map $a\delta_g \mapsto (a\otimes 1) \delta_g$. Then $-\rtimes_\alpha G$ is an exact crossed-product functor with the cp-map property by Lemma \ref{lem:exact} and therefore $-\rtimes_\alpha G\geq -\rtimes_{\corr}G$. It remains to prove the converse inequality. Consider $\mathcal  B(\ell^2(G))$ as a $G$-$C^*$-algebra equipped with the conjugation action of the left regular representation $\lambda\colon G\to \mathcal U(\ell^2(G))$. By $G$-injectivity, there is a $G$-equivariant unital completely positive map $\varphi\colon \mathcal B(\ell^2(G))\to I$. Consider the ``untwisting isomorphism''
		\[\Psi\colon \mathcal B(\ell^2(G))\otimes_{\max}(A\rtimes G)\xrightarrow{\cong} (\mathcal B(\ell^2(G))\otimes_{\max}A)\rtimes G,\quad T\otimes a \delta_g \mapsto T\lambda_{g^{-1}} \otimes a\delta_g\]
	and denote by $\kappa$ the following composition of contractive maps.
%		\[begin{tikzcd}
%		a
%			%A\rtimes_{\corr}G\arrow[r,hook, "Thm. \ref{thm:embedding1}"] 
%			%&C^*_r(G)\otimes_{i,\varepsilon}A\rtimes G\arrow[r,hook, "\lambda \otimes \id"] 
%			%&\mathcal B(\ell^2(G))\otimes_{\max}(A\rtimes G) \arrow[r, "\Psi"] 
%			%&(\mathcal B(\ell^2(G))\otimes_{\max}A)\rtimes G\arrow[r, "(\varphi \otimes \id)\rtimes G"] 
%			%(I\otimes_{\max}A)\rtimes G
%		\end{tikzcd}\]
		\[
			\begin{tikzcd}[column sep=4.5 em]
			A\rtimes_{\corr}G\arrow[r,hook, "\text{Thm.} \ref{thm:embedding1}"] 
			&C^*_r(G)\otimes_{i,\varepsilon}(A\rtimes G)\arrow[r,hook, "\lambda \otimes \id"] 
			&\mathcal B(\ell^2(G))\otimes_{\max}(A\rtimes G) \arrow[d, "\Psi"] \\
			&(I\otimes_{\max}A)\rtimes G &(\mathcal B(\ell^2(G))\otimes_{\max}A)\rtimes G\arrow[l, "(\varphi \otimes \id)\rtimes G"] 
			\end{tikzcd}
		\]
		 A straightforward computation shows that $\kappa(a\delta_g)=(a\otimes 1)\delta_g$ for $a\in A$ and $g\in G$. Thus, we have $\kappa(A\rtimes_{\corr}G)=A\rtimes_\alpha G$ and therefore $-\rtimes_{\corr}G\geq -\rtimes_\alpha G$. 
\end{proof}

\begin{cor}\label{cor:compare}
	For any discrete group $G$, we have $-\rtimes_{\inj}G\leq -\rtimes_{\corr}G$ and $C^*_{\inj}(G)=C^*_{\corr}(G)$. 
\end{cor}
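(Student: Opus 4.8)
The plan is to deduce Corollary \ref{cor:compare} directly from Proposition \ref{thm:embedding2} together with the universal property of $-\rtimes_{\inj}G$. First I would take $I=\ell^\infty(G)$, which is $G$-injective (as noted after the definition on p.\pageref{defn:injective}), so that Proposition \ref{thm:embedding2} supplies an injective $*$-homomorphism
	\[A\rtimes_{\corr}G\hookrightarrow (A\otimes_{\max}\ell^\infty(G))\rtimes G\]
for every $G$-$C^*$-algebra $A$, compatible with the canonical embedding $a\mapsto a\otimes 1$ on the algebraic level. The key observation is then that the assignment $A\mapsto A\rtimes_{\corr}G$, viewed through this embedding, defines an \emph{injective} crossed-product functor: given an injective $G$-equivariant $*$-homomorphism $A\hookrightarrow B$, the induced map $A\otimes_{\max}\ell^\infty(G)\to B\otimes_{\max}\ell^\infty(G)$ is injective because $-\otimes_{\max}\ell^\infty(G)$ is an injective tensor-product functor (it dominates the minimal tensor product, which is injective — alternatively one invokes that $-\otimes_{i,\varepsilon}\ell^\infty(G)$ is injective and $\ell^\infty(G)$ being injective makes $A\otimes_{\max}\ell^\infty(G)=A\otimes_{i,\varepsilon}\ell^\infty(G)$), and then $-\rtimes G$ is injective by the discussion in Section \ref{sec:preliminaries}. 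Hence the diagram
	\[
		\begin{tikzcd}
			A\rtimes_{\corr}G\arrow[r,hook]\arrow[d] & (A\otimes_{\max}\ell^\infty(G))\rtimes G\arrow[d]\\
			B\rtimes_{\corr}G\arrow[r,hook] & (B\otimes_{\max}\ell^\infty(G))\rtimes G
		\end{tikzcd}
	\]
commutes and has injective rows and right vertical map, forcing the left vertical map to be injective.

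Once $-\rtimes_{\corr}G$ is seen to be an injective crossed-product functor, maximality of $-\rtimes_{\inj}G$ among injective crossed-product functors (Section \ref{sec:preliminaries}, \cite[Prop.\,3.5]{buss20}) immediately yields $-\rtimes_{\inj}G\geq -\rtimes_{\corr}G$ in the partial order on $C^*$-completions. Combined with the opposite inequality $-\rtimes_{\inj}G\leq -\rtimes_{\corr}G$ established in Corollary \ref{cor:compare}'s companion (or rather, the inequality we are proving) — wait, I should be careful here: the ordering convention in the excerpt is that $F^\mu\geq F^\nu$ means the identity extends to a map $F^\mu(X)\to F^\nu(X)$, i.e. $F^\mu$ is the \emph{larger} completion. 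So "$-\rtimes_{\inj}G\leq -\rtimes_{\corr}G$" means there is a quotient map $A\rtimes_{\corr}G\to A\rtimes_{\inj}G$, which is exactly what maximality of $-\rtimes_{\inj}G$ gives once we know $-\rtimes_{\corr}G$ is injective. This proves the first assertion.

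For the statement $C^*_{\inj}(G)=C^*_{\corr}(G)$, I would specialize to the trivial coefficient algebra $A=\C$. One inequality $C^*_{\corr}(G)\geq C^*_{\inj}(G)$ follows from the functor inequality just proved. For the reverse, note that $\C\otimes_{\max}\ell^\infty(G)=\ell^\infty(G)$ and Proposition \ref{thm:embedding2} gives an embedding $C^*_{\corr}(G)=\C\rtimes_{\corr}G\hookrightarrow \ell^\infty(G)\rtimes G$; one then checks that the norm on $\C[G]$ inherited from $\ell^\infty(G)\rtimes G$ via $\delta_g\mapsto 1\cdot\delta_g$ is dominated by the $C^*_{\inj}(G)$-norm. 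Concretely, $-\rtimes_{\inj}G$ applied to the unital inclusion $\C\hookrightarrow\ell^\infty(G)$ (which is $G$-equivariant) together with injectivity gives $C^*_{\inj}(G)\hookrightarrow \ell^\infty(G)\rtimes_{\inj}G$, and comparing with the $-\rtimes G$-completion one sees both the $\corr$- and $\inj$-completions of $\C[G]$ sit compatibly inside $\ell^\infty(G)\rtimes G$ (since $\ell^\infty(G)$ is nuclear, all crossed-product functors agree on it with the reduced one by amenability of the action — $G\acton\ell^\infty(G)$ has an invariant mean issue, so more carefully: $\ell^\infty(G)\rtimes_{\mu}G$ may depend on $\mu$, but the point is the embeddings are compatible). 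So the cleanest route: having shown both inequalities between the functors, evaluate at $\C$ to get equality of $C^*$-algebras.

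The main obstacle I anticipate is the verification that $-\otimes_{\max}\ell^\infty(G)$ is injective — this is not true for $-\otimes_{\max}-$ with a general second variable, so it is essential to use that $\ell^\infty(G)$, being an injective von Neumann algebra (it is $\prod_G\C$), has the property that $A\otimes_{\max}\ell^\infty(G)=A\otimes_{i,\varepsilon}\ell^\infty(G)$; indeed by Theorem \ref{thm:kirchberg}(2) the functor $-\otimes_{i,\varepsilon}\ell^\infty(G)$ is injective, and one must argue the maximal tensor product coincides with Kirchberg's here. This coincidence holds because for any $C^*$-algebra $A$ with faithful representation $A\subseteq\mathcal B(H)$, Arveson's extension theorem produces a ucp splitting $\mathcal B(H)\to\ell^\infty(G)$ of... — actually the relevant fact is that $\ell^\infty(G)$ is nuclear, hence $A\otimes_{\max}\ell^\infty(G)=A\otimes\ell^\infty(G)$ for all $A$, and the minimal tensor product is injective. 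This sidesteps Kirchberg entirely and makes the injectivity of $A\mapsto(A\otimes_{\max}\ell^\infty(G))\rtimes G$ transparent. So the only genuinely delicate point is bookkeeping the direction of inequalities in the $\geq$/$\leq$ convention, which is routine once one fixes it carefully against the definition in Section \ref{sec:preliminaries}.
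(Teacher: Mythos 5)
There is a genuine gap, and it sits exactly at your ``key observation''. You claim that $A\mapsto (A\otimes_{\max}\ell^\infty(G))\rtimes G$ is injective because ``$-\rtimes G$ is injective by the discussion in Section \ref{sec:preliminaries}''. That is a misreading: Section \ref{sec:preliminaries} states that the \emph{maximal} crossed product is \emph{exact} and that the \emph{reduced} crossed product is injective; the maximal crossed product is not injective in general. Since Proposition \ref{thm:embedding2} identifies $A\rtimes_{\corr}G$ with the image of $A\rtimes G$ in $(A\otimes_{\max}\ell^\infty(G))\rtimes G$, the injectivity you assert would make $-\rtimes_{\corr}G$ an exact \emph{and} injective crossed-product functor, which by this very corollary would force $-\rtimes_{\inj}G=-\rtimes_{\corr}G$ — precisely the question the introduction declares open. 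So the step cannot be justified by anything in the paper (nuclearity of $\ell^\infty(G)$ only gives injectivity of $-\otimes_{\max}\ell^\infty(G)$, not of the subsequent $-\rtimes G$; amenability of $G\acton\ell^\infty(G)$ holds only for exact $G$). Moreover, even if $-\rtimes_{\corr}G$ were injective, your deduction runs in the wrong direction: maximality of $-\rtimes_{\inj}G$ among injective functors yields $-\rtimes_{\inj}G\geq-\rtimes_{\corr}G$, i.e.\ a quotient map $A\rtimes_{\inj}G\to A\rtimes_{\corr}G$, not the asserted quotient $A\rtimes_{\corr}G\to A\rtimes_{\inj}G$; the inequality $-\rtimes_{\inj}G\leq-\rtimes_{\corr}G$ is not what that universal property gives.

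The intended argument uses injectivity of $-\rtimes_{\inj}G$ itself, not of $-\rtimes_{\corr}G$: for $G$-injective $I$, the equivariant embedding $A\hookrightarrow A\otimes_{\max}I$, $a\mapsto a\otimes 1$, induces an isometric embedding $A\rtimes_{\inj}G\hookrightarrow (A\otimes_{\max}I)\rtimes_{\inj}G$, and the latter is a quotient of $(A\otimes_{\max}I)\rtimes G$; combining this with the isometry of Proposition \ref{thm:embedding2} gives, for $x\in A[G]$, $\|x\|_{\inj}\leq\|x\otimes 1\|_{(A\otimes_{\max}I)\rtimes G}=\|x\|_{\corr}$, which is the first statement. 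Your treatment of $C^*_{\inj}(G)=C^*_{\corr}(G)$ also misses the essential ingredient: it is not ``evaluate both inequalities at $\C$'' (you only have one inequality), but the fact that $I\rtimes G=I\rtimes_{\inj}G$ for $G$-injective $I$ (\cite[Cor.\,3.3]{buss20}), so that with $A=\C$ the chain above becomes an equality of norms on $\C[G]$. Your appeal to nuclearity of $\ell^\infty(G)$ or amenability of the translation action cannot replace this, since for non-exact $G$ the various completions of $\ell^\infty(G)[G]$ need not coincide with the reduced one.
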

\begin{proof}
	Let $A,I$ be $G$-$C^*$-algebras where $I$ is $G$-injective. The embedding $A\hookrightarrow A\otimes_{\max} I$ induces an embedding $A\rtimes_{\inj}G\hookrightarrow (A\otimes_{\max}I)\rtimes_{\inj}G$. The first statement now follows from Proposition \ref{thm:embedding2}. The second statement follows from the same argument and the fact that $I\rtimes G=I\rtimes_{\inj}G$ \cite[Cor. 3.3]{buss20}.
\end{proof}

 \bibliography{references}{}
	\bibliographystyle{alpha}
\end{document}